\documentclass[12pt]{amsart}
\usepackage{amsmath,amssymb,amsthm}
\usepackage{mathtools}
%\usepackage{xcolor}
%\usepackage{graphicx} 
%\usepackage{pgf,tikz}
%\usetikzlibrary{arrows}
%\pagestyle{}
%\usepackage{lipsum}
%\usepackage{caption}  
%\usepackage{subcaption}
%\usepackage{pgfplots}
\usepackage{scalerel}
\usepackage{enumerate}
\usepackage{hyperref}
\hypersetup{colorlinks=true,citecolor=blue,linkcolor=red}
\usepackage{cite}
\usepackage{color}
\usepackage[usestackEOL]{stackengine}
\usepackage[a4paper,width=16.5cm,top=3cm,bottom=2.8cm]{geometry}

\newtheorem{theorem}{Theorem}[section]
\newtheorem{proposition}[theorem]{Proposition}
\newtheorem{corollary}[theorem]{Corollary}
\newtheorem{lemma}[theorem]{Lemma}

\newtheorem{remark}[theorem]{Remark}

\newtheorem{question}[theorem]{Question}

\theoremstyle{plain}
\newtheorem*{theorem*}{Theorem}
\newtheorem*{question*}{Question}
\newtheorem*{example*}{Example}

\def\dashint{\,\ThisStyle{\ensurestackMath{%
			\stackinset{c}{.2\LMpt}{c}{.5\LMpt}{\SavedStyle-}{\SavedStyle\phantom{\int}}}%
		\setbox0=\hbox{$\SavedStyle\int\,$}\kern-\wd0}\int}
%Sets
\newcommand{\rn}{\mathbb{R}^n}

%polynomials

%Hardy Spaces

\newcommand{\honern}{{H}^1(\mathbb{R}^n)}

\newcommand{\hprn}{{H}^p(\mathbb{R}^n)}

%bmo spaces
\newcommand{\bmorn}{{BMO}(\rn)}
\newcommand{\bmor}{{BMO}(\mathbb{R})}
\newcommand{\bmortwo}{{BMO}(\mathbb{R}^2)}

\newcommand{\bmornn}[1]{\|#1\|_{{BMO}(\rn)}}
\newcommand{\bmoronen}[1]{\|#1\|_{{\bmor}}}

\newcommand{\bmortwon}[1]{\|#1\|_{{\bmortwo}}}

\newcommand{\bmodyadicrn}{{BMO_d}(\rn)}

\newcommand{\vmorn}{{VMO}(\rn)}

%Lp spaces

\newcommand{\lonen}[1]{\|#1\|_{{L}^1}}

\newcommand{\lprn}{{L}^p(\mathbb{R}^n)}

\newcommand{\linfty}{{L}^{\infty}}
\newcommand{\linftyrn}{{L}^{\infty}(\mathbb{R}^n)}

\newcommand{\linftyn}[1]{\|#1\|_{\linfty}}

%local Lp spaces
\newcommand{\lonernloc}{{L}_{loc}^1(\mathbb{R}^n)}

%average
\newcommand{\ave}[2]{\dashint_{#2}{#1}}

%oscillation
\newcommand{\osc}[2]{O(#1,#2)}

%sobolev spaces
\newcommand{\woneprn}{W^{1,p}(\rn)}

%Lipschitz spaces
\newcommand{\homogenlipschit}[2]{\dot{\Lambda}^{#1}(\mathbb{R}^{#2})}

%ck spaces

\begin{document}
	
	\title{Maximal Operators on BMO and Slices}
	\author{Shahaboddin Shaabani}
	\address{Department of Mathematics and Statistics\\Concordia University}
	\email{shahaboddin.shaabani@concordia.ca}
	
	\begin{abstract}
		We prove that the uncentered Hardy-Littlewood maximal operator is discontinuous on $\bmorn$ and maps $\vmorn$ to itself. A counterexample to the boundedness of the strong and directional maximal operators on $\bmorn$ is given, and properties of slices of $\bmorn$ functions are discussed.
	\end{abstract}	
	\keywords{Maximal operators, BMO, VMO}
	\subjclass{42B25, 42B35}
	\date{}
	\maketitle
	\section{Introduction}	
	Let $A\subset \rn$ be a measurable set with positive finite measure and $f\in \lonernloc$. By the mean oscillation of $f$ on $A$ we mean the quantity
	
	\[
	\osc{f}{A}:=\ave{|f-f_A|}{A},
	\]
	where $\ave{f}{A}$ and $f_A$ mean the average of $f$ over $A$, i.e. $\frac{1}{|A|}\int_{A} f$.
	Then it is said that $f$ is of bounded mean oscillation if $\osc{f}{Q}$ is uniformly bounded on all cubes $Q$ (by a cube we mean a closed cube with sides parallel to the axes). The space of such functions is denoted by $\bmorn$, and modulo constants the following quantity defines a norm on this space:
	
	\[
	\bmornn{f}:=\sup_{Q}O(f,Q).
	\]
	Sometimes we use $\|f\|_{BMO(Q_0)}$ which means that we take the above supremum over all cubes contained in $Q_0$. $\bmorn$ is a Banach space, and since its introduction has played an important role in harmonic analysis. It is the dual of the Hardy space $\honern$, it contains $\linftyrn$ and somehow serves as a substitute for it. For instance, Calderón-Zygmund singular integral operators map $\bmorn$ to itself and consequently these operators map $\linftyrn$ to $\bmorn$ but not to itself \cite{MR0807149}.\\
	
	Another important class of operators is the class of maximal operators, and the first objective of the present paper is to investigate the action of some of these operators on $\bmorn$. Let us recall that the uncentered Hardy-Littlewood maximal operator is defined by
	\[
	Mf(x):=\sup_{x\in Q}\ave{|f|}{Q}, \quad f\in\lonernloc,
	\]
	where the above supremum is taken over all cubes containing $x$. As it is well known $M$ is of weak-type $(1,1)$ and bounded on $\lprn$ for $1<p\le \infty$ \cite{MR0807149}. For a function $f \in \bmorn$, it might be the case that $Mf$ is identically equal to infinity. For instance, this is the case when $f(x)=\log|x|$. However, in \cite{MR0621018} the authors proved that if this is not the case then $Mf$ belongs to $\bmorn$ and for a dimensional constant $c(n)$ we have
	\[
	\bmornn{Mf}\le c(n)\bmornn{f}.
	\]
	Another proof of this was given in \cite{MR0660603}, and a third one in \cite{MR4012803}, where the author proved that $M$ preserves Poincaré inequalities. Regarding this, we ask the following question about the continuity of the uncentered Hardy-Littlewood maximal operator on $\bmorn$:
	\begin{question}
		Let $f\in \linftyrn$ and $\{f_k\}$ be a sequence of bounded functions converging to $f$ in $\bmorn$. Is it true that $\{Mf_k\}$ converges to $Mf$ in $\bmorn$?
	\end{question}
	
	The operator $M$ is nonlinear and for such operators continuity does not follow from boundedness. However, it is pointwise sublinear and this makes it continuous on $\lprn$ for $1<p\le\infty$. In \cite{MR2280193}, a similar question has been studied for Sobolev spaces, where the author proved that $M$ is continuous on $\woneprn$ for $1<p<\infty$. However, in Section 2 we give a negative answer to the above question.\\
	
	$\bmorn$ has an important subspace, namely $\vmorn$ or functions of vanishing mean oscillation. $\vmorn$ is the closure of the uniformly continuous functions in $\bmorn$. Another characterization of $\vmorn$ is given in terms of the modulus of mean oscillation which is defined by
	
	\begin{equation}\label{vmocondition}
		\omega(f, \delta):=\sup_{l(Q)\le \delta}O(f,Q),
	\end{equation}
	and $f\in \vmorn$ exactly when $\lim\limits_{\delta\to0}\omega(f, \delta)=0$ (in the above by $l(Q)$ we mean the side length of $Q$)\cite{MR0377518}. Regarding this subspace we ask:
	\begin{question}
		Let $f\in \vmorn$ such that $Mf$ is not identically equal to infinity. Is it true that $Mf \in \vmorn$?
	\end{question} 
	
	In Section 3 we provide a positive answer to this question.\\
	
	In the last section, we consider the action of some other maximal operators on $\bmorn$. More specifically, the directional maximal operator in the direction $e_1=(1,0,...,0)$, $M_{e_1}$, and the strong maximal operator, $M_s$, which are defined as the following:
	
	\begin{equation*}
		M_{e_1}f(x_1,x'):=\sup_{x_1 \in I} \ave{|f_{x'}|}{I}, \quad \quad M_sf(x):=\sup_{x \in R} \ave{|f|}{R}.
	\end{equation*}
	In the above, $f_{x'}(t):=f(t,x')$, where $(t,x')\in\mathbb{R}\times\mathbb{R}^{n-1}$, and the left supremum is taken over all closed intervals containing $x_1$. In a similar way one can define the directional maximal operator $M_e$, which is taken in the direction $e\in\mathbb{S}^{n-1}$, simply by taking the one dimensional uncentered Hardy-Littlewood maximal operator on every line in direction $e$. However, since $\bmorn$ is invariant under rotations it is enough to study $M_{e_1}$. In the above the right supremum is taken over all rectangles containing $x$, and by a rectangle we mean a closed rectangle with sides parallel to the axes. These are the most important maximal operators in multi-parameter harmonic analysis and are bounded and continuous on $\lprn$ for $1<p\le\infty$ \cite{MR0766959}. Regarding these operators we ask:
	
	\begin{question}
		Are there constants $C, C'\ge1$ such that at least for every bounded function $f$ the following inequalities hold?
		\[
		\bmornn{M_{e_1}f}\le C\bmornn{f}, \quad \bmornn{M_sf}\le C'\bmornn{f}.
		\]
	\end{question}
	
	To answer this question we have to study the properties of slices of functions in $\bmorn$, which is the second objective of this paper. Many function spaces have the property that their slices lie in the same scale of spaces. For example, almost every slice of a function in $\lprn$ or $\woneprn$ lies in $L^p(\mathbb{R}^{n-1})$ or $W^{1,p}(\mathbb{R}^{n-1})$, respectively\cite{MR4567945}. The same is true for $BMO_s(\rn)$, strong $BMO$, which is the subspace of $\bmorn$ consisting of all functions with bounded mean oscillation on rectangles\cite{MR4085448}. This property is also satisfied by the scale of homogeneous Lipschitz spaces $\homogenlipschit{n(\frac{1}{p}-1)}{n}$, the duals of $\hprn$ for $0<p<1$ \cite{MR0807149}. Regarding this, we ask our last question:
	\begin{question}
		Is it true that almost every horizontal or vertical slice of a function in $\bmortwo$ belongs to $\bmor$?
	\end{question}
	In Section 4 we answer both questions negatively and in the last theorem of this paper we prove a property of the slices of functions in $\bmortwo$.\\
	
	Before we proceed further, let us fix some notation. By $A\lesssim B$, $A\gtrsim B$ and $A\approx B$, we mean $A\le CB$, $A\ge CB$ and $C^{-1}B\le A \le CB$ respectively, where $C$ is a constant independent of the important parameters.

\section{Discontinuity of $M$ on $\bmorn$}
	Our theorem in this section is the following:
	
	\begin{theorem}\label{discontinuitytheorem}
		Let $f$ be a nonnegative function supported in $[0,1]$,  $\linftyn{f}\le1$ and $\lonen{f}>\log2$. Then, there exists a sequence of bounded functions $\{f_n\}$ converging to $f$ in $\bmor$ such that $\{Mf_n\}$ does not converge to $Mf$ in $\bmor$.
	\end{theorem}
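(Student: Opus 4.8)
The plan is to perturb $f$ by a sequence of "thin tall spikes" that are small in $\bmor$ but whose presence changes the maximal function on a fixed large interval in a way that does not vanish. Fix the hypothesis $\lonen{f} > \log 2$; the point of this is that for $x$ slightly to the left of $0$, the interval $[x,1]$ already gives $Mf(x) \ge \frac{1}{1-x}\lonen{f}$, so near $x=0$ one has $Mf \ge \lonen{f} > \log 2$, while far to the left $Mf$ decays like $\lonen{f}/|x|$. In particular the graph of $Mf$ to the left of the origin is a fixed hyperbola-type profile, and on a suitable interval $J$ straddling $0$ the oscillation $O(Mf,J)$ is a fixed positive number. I would first record this: choose once and for all an interval $J$ (say $J=[-a,b]$ with $a,b>0$ chosen depending only on $\lonen{f}$) on which $O(Mf,J)=:c_0>0$.

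Next I would define $f_n := f + g_n$, where $g_n = \mathbf{1}_{E_n}$ and $E_n$ is a union of $n$ equally spaced tiny intervals inside $[0,1]$, each of length $\varepsilon_n$ with $n\varepsilon_n \to 0$. Then $\lonen{g_n}\to 0$ and, more to the point, $\bmoroneonen{g_n}\to 0$: on any interval $I$ the quantity $O(g_n,I)$ is controlled by the density $|I\cap E_n|/|I|$, which is $\lesssim n\varepsilon_n$ when $I$ is long and is bounded by the trivial bound $1$ only when $I$ is comparably short to $\varepsilon_n$, but in that short-scale regime one can instead exploit that $g_n$ restricted to such $I$ is a single characteristic function of a subinterval, whose BMO norm on that scale is still $\lesssim n\varepsilon_n$ after optimizing — the cleanest route is the John–Nirenberg-type estimate $O(g_n,I)\le 2\,\dashint_I g_n \le 2 n\varepsilon_n$ using nonnegativity, valid for every $I$. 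Hence $f_n\to f$ in $\bmor$.

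The heart of the matter is to show $Mf_n \not\to Mf$ in $\bmor$, and for this I would produce a \emph{fixed} test interval — I claim $J$ above works, or a slight enlargement of it — on which $O(Mf_n - Mf,\cdot)$ stays bounded below. The mechanism: although $g_n$ has small integral, $Mg_n$ does \emph{not} go to zero; because the spikes have height $1$ and are spread across $[0,1]$, for every $x$ in a fixed neighbourhood of $[0,1]$ there is a short interval around the nearest spike on which the average of $g_n$ is close to $1$, so $Mg_n(x)$ is bounded below by a constant there; meanwhile, far to the left, $Mg_n(x)\lesssim n\varepsilon_n/|x| \to 0$. Since $Mf\le Mf_n \le Mf + Mg_n$ and $Mf_n\ge M g_n$, the function $Mf_n$ is forced to be $\gtrsim 1$ throughout $[0,1]$ (and just left of $0$) for every $n$, whereas $Mf$, being a fixed function, is \emph{not} bounded below by that constant on all of $[0,1]$ unless $f$ itself is already large there — and generically it is not, e.g. if $f$ vanishes on part of $[0,1]$ then $Mf$ dips strictly below the spike-induced floor. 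Comparing $\dashint_J |Mf_n - (Mf_n)_J|$ against the averaged version for $Mf$ and using that $Mf_n$ has a persistent plateau that $Mf$ lacks, one extracts a lower bound $\limsup_n \bmoroneonen{Mf_n - Mf} \ge c_1 > 0$. I would organize the final estimate by choosing a sub-interval $J' \subset [0,1]$ on which $Mf$ is, say, $\le \lonen{f}$ minus a gap, and noting $Mf_n \ge 1 - o(1)$ on $J'$ while near the left endpoint of $J$ we can still compare to $Mf$'s hyperbola; subtracting and averaging gives the claim. If $f$ happens to be so large that $Mf$ itself already has a plateau at height $\ge 1$ on all of $[0,1]$, one instead perturbs \emph{downward} is not available (BMO perturbation must keep things small), so in that edge case I would place the spikes of height slightly exceeding $\linftyn{f}$ — still legitimate since only $\lonen{f}>\log2$ and $\linftyn{f}\le1$ are assumed of $f$, not of $f_n$ — so that $Mf_n$ strictly exceeds $Mf$ on $[0,1]$ by a definite amount.

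The main obstacle I anticipate is the lower bound on $\bmoroneonen{Mf_n - Mf}$: it is not enough that $Mf_n$ and $Mf$ differ pointwise by a bounded amount; BMO is blind to bounded additive shifts, so I must exhibit a genuine \emph{oscillation} of the difference $Mf_n - Mf$ on a single fixed interval. The way to guarantee this is to pin down two sub-locations in $J$ — one inside $[0,1]$ where $Mf_n - Mf$ is $\ge c_1$ because of the spike plateau, and one well to the left where $Mf_n - Mf \to 0$ because both maximal functions there are governed only by the (barely perturbed) total integral — and then the average of $|Mf_n-Mf-(\cdots)_J|$ over $J$ is bounded below by half the gap $c_1$ for all large $n$. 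Making the "inside $[0,1]$" estimate uniform requires the elementary but slightly delicate observation that $Mg_n\ge \tfrac12$ on all of $[0,1]$ once the spikes are spaced at distance $\le$ (something) from each other, which follows by testing the average of $g_n$ on the interval from $x$ to the nearest spike past it together with that spike; I would isolate this as a short preliminary lemma.
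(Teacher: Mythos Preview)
Your proposal has a fatal gap at the very first step: the claim that $\|g_n\|_{BMO(\mathbb{R})}\to 0$ for $g_n=\mathbf{1}_{E_n}$ is false. A characteristic function of a nontrivial interval has BMO norm bounded below by a universal positive constant (for instance, if $I$ has length $2\varepsilon_n$ and is centred on one of your spikes of length $\varepsilon_n$, then $\dashint_I g_n=\tfrac12$ and $O(g_n,I)=\tfrac12$). Your asserted inequality $\dashint_I g_n\le n\varepsilon_n$ holds only for intervals $I$ that are long relative to the spacing; once $|I|$ is comparable to $\varepsilon_n$ the density $|I\cap E_n|/|I|$ can be anything in $[0,1]$, so $\|g_n\|_{BMO}\ge \tfrac12$ for every $n$ and $f_n\not\to f$ in $BMO(\mathbb{R})$. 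The subsequent remark that ``on that scale $g_n$ is a single characteristic function of a subinterval, whose BMO norm is still $\lesssim n\varepsilon_n$'' is precisely the error restated: a single step function has BMO norm of order $1$, not of order its mass.

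The paper circumvents this by perturbing with functions whose \emph{values}, not just whose integrals, are small: it takes $g_n$ to be (essentially) $\frac{1}{\log n}\log^+|x|$ extended $2n$-periodically and cut off near the support of $f$, so that $\|g_n\|_{L^\infty}\le 1$, $\|g_n\|_{BMO}\lesssim 1/\log n\to 0$, $g_n$ vanishes on a fixed interval $[c,1]$, yet $\dashint_{[0,n]}g_n\to 1$. The perturbation lives entirely \emph{outside} the support of $f$, and the key computation shows $Mf_n=Mf$ on $[c,0]$, forcing the difference $Mf_n-Mf$ to vanish on half of the test interval $[2c,0]$ while being bounded below on the other half. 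This is also where the hypothesis $\|f\|_{L^1}>\log 2$ enters quantitatively: it is exactly the threshold that makes the resulting lower bound on the oscillation strictly positive for $|c|$ large. In your sketch that hypothesis is mentioned but never used, which is another signal that the mechanism is off.
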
	
	To prove this we need a couple of simple lemmas which we give below.
	\begin{lemma}\label{evenperiodicextension}
		Let $T>0$ and $h \in BMO[0,\frac{T}{2}]$. Then the even periodic extension of $h$, which is defined by
		
		\begin{equation*}
			H(x):=h(x),\quad x\in[0,\frac{T}{2}], \quad\quad	H(-x)=H(x),\quad H(x+T)=H(x), \quad x \in \mathbb{R},
		\end{equation*}
		is in $\bmor$ and $\bmoronen{H}\le 10\|h\|_{BMO[0,\frac{T}{2}]}$.
	\end{lemma}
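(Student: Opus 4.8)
The plan is to reduce the bound on an arbitrary cube $Q=[a,b]$ to a bound on cubes sitting inside a single fundamental domain of the $T$-periodic even extension, where the oscillation of $H$ is controlled directly by $\|h\|_{BMO[0,T/2]}$. The key structural feature of the even periodic extension is that $\mathbb{R}$ decomposes into translated copies of $[0,T/2]$ and its reflection $[-T/2,0]$, and on each such copy $H$ is (a translate/reflection of) $h$; moreover $BMO$ is invariant under reflection with the same norm, and invariant under dividing a set into finitely many pieces in a controlled way. So the heart of the matter is a covering/localization argument.

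First I would dispose of small cubes: if $I=[a,b]$ has length $\ell(I)\le T/2$, then $I$ is contained in an interval of length $\le T$ on which $H$ agrees, up to translation and at most one reflection at a point of the form $kT/2$, with the function obtained by reflecting $h$ evenly across that point. If $I$ lies entirely in one "monotone" copy of $[0,T/2]$ or $[-T/2,0]$, then $O(H,I)\le \|h\|_{BMO[0,T/2]}$ directly. If $I$ straddles one reflection point $p=kT/2$, write $I=I^-\cup I^+$ with $I^\pm$ the two halves; using $|f_A - f_B|\le O(f,A) + O(f,B) + $ (difference of averages term), or more cleanly the standard fact that $O(f,I)\le \frac{|I^-|}{|I|}O(f,I^-) + \frac{|I^+|}{|I|}O(f,I^+) + 2\big(\frac{|I^-||I^+|}{|I|^2}\big)|f_{I^-}-f_{I^+}|$, and bounding $|f_{I^-}-f_{I^+}|\le |f_{I^-}-f_I|+|f_I-f_{I^+}| $ crudely by $4\max_\pm O(f,I^\pm)$ after another averaging step, I get $O(H,I)\lesssim \|h\|_{BMO[0,T/2]}$ with an absolute constant. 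Since only one reflection point can lie strictly inside an interval of length $\le T/2$, this handles all small cubes with a constant like $4$ or $5$.

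Next, for a large interval $I=[a,b]$ with $\ell(I)>T/2$: here I would use the $T$-periodicity to compare $H_I$ to the average $m:=\frac{1}{T}\int_0^T H$ of $H$ over one full period (which equals $\frac{2}{T}\int_0^{T/2}h$). Partition $I$ into consecutive subintervals: a collection of full periods, each contributing average exactly $m$, plus at most two leftover end intervals $J_1,J_2$ of length $<T$. On each full period $P$, $O(H,P)=O(H,[0,T])$, and this in turn is bounded by combining the two halves $[0,T/2]$ and $[T/2,T]$ (the second being a reflected copy) exactly as in the straddling case above, so $O(H,[0,T])\lesssim \|h\|_{BMO[0,T/2]}$. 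For the leftover pieces $J_i$ of length $<T$, split each further at the (at most one or two) reflection points it contains into monotone copies; each monotone piece $K$ satisfies $\frac{1}{|K|}\int_K |H-m|\le O(h,[0,T/2]) + |h_{[0,T/2]} - m| + \dots$, but since $m$ is itself an average of $h$-values over $[0,T/2]$, the term $|h_{[0,T/2]}-m|$ is absorbed, giving $\frac{1}{|K|}\int_K|H-m|\lesssim \|h\|_{BMO[0,T/2]}$. Then $\int_I |H-m| = \sum_P \int_P|H-m| + \sum_i\int_{J_i}|H-m| \le (\text{number of pieces})\cdot(\text{max piece length})\cdot C\|h\|_{BMO[0,T/2]}$, and dividing by $|I|$ — using that the full periods tile most of $I$ and each contributes $\le C\|h\|_{BMO[0,T/2]}\cdot T$ while there are $\le |I|/T + 1$ of them — yields $O(H,I)\le \frac{1}{|I|}\int_I|H-m|\cdot(\text{factor }2)\lesssim \|h\|_{BMO[0,T/2]}$. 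Taking the supremum over all $I$ and tracking constants gives $\bmoronen{H}\le 10\|h\|_{BMO[0,T/2]}$.

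The main obstacle is bookkeeping the constant so that it comes out at $10$ rather than something larger: the repeated use of "$O(f,A\cup B)\le$ (weighted average of $O(f,A),O(f,B)$) + (average-difference term)" each time one splits at a reflection point, together with the passage from $H_I$ to the global mean $m$, tends to inflate the constant. I would be careful to (i) always compare to $m$ rather than to local averages when summing over many periods, which avoids a telescoping loss, and (ii) note that an interval of length $\le T/2$ contains at most one reflection point while an interval of length $<T$ contains at most two, so only a bounded number of splits is ever needed. With these observations the constant stays comfortably below $10$; the statement's $10$ is generous.
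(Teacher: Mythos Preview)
Your approach is correct in outline but takes a different route from the paper's, which is cleaner. Where you \emph{split} $I$ at reflection points and then control the cross-term $|H_{I^-}-H_{I^+}|$, the paper instead \emph{enlarges} $I$ to a symmetric interval $J$ centered at the reflection point (with $|J|\le 2|I|$), applies the elementary bound $O(H,I)\le 2\tfrac{|J|}{|I|}O(H,J)$, and then observes that $O(H,J)=O(H,J^+)$ by even symmetry, where $J^+\subset[0,T/2]$ is one half of $J$. For large intervals the paper does the analogous thing: it enlarges $I$ to $J=[nT,mT]$ with $|J|\le |I|+2T\le 5|I|$, and then $O(H,J)=O(h,[0,T/2])$ directly by periodicity and symmetry. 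This sidesteps all cross-term and leftover-piece bookkeeping and yields the constant $10$ on the nose (from $2\cdot 5$). Your route can be made to work as well, but be aware of one slip: the claimed bound $|H_{I^-}-H_{I^+}|\le 4\max_\pm O(H,I^\pm)$ is false in general (take $H$ constant on each half with different constants, so both oscillations vanish). What actually rescues you is the reflection structure: after reflecting, $-I^-$ and $I^+$ are nested subintervals of $[0,T/2]$ sharing an endpoint, so $|h_{-I^-}-h_{I^+}|$ is controlled by $\|h\|_{BMO[0,T/2]}$ times a ratio of lengths, and this ratio is cancelled by the weight $\tfrac{|I^-||I^+|}{|I|^2}$ in your splitting formula. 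With that fix your argument goes through, though the constant you obtain is somewhat larger than $10$.
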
	
	\begin{proof}
		For an arbitrary interval $I$, there are two possibilities:\\
		
		(i) $|I|\le \frac{T}{2}$.\\
		
		In this case by a translation by an integer multiple of $T$ and using periodicity of $H$, we may assume either $I\subset[-\frac{T}{2},\frac{T}{2}]$ or $I\subset[0,T]$. Suppose $I\subset[-\frac{T}{2},\frac{T}{2}]$ and note that if $0\notin I$, we have either $I\subset[0,\frac{T}{2}]$ or $I\subset[-\frac{T}{2},0]$ and from the symmetry $O(H,I)\le\|h\|_{BMO[0,\frac{T}{2}]}$. If $0\in I$, then take the interval $J$ centered at zero with the right half $J^+$, which contains $I$ and $|J|\le2 |I|.$ Again from symmetry we get
		\[
		O(H,I)\le2 \frac{|J|}{|I|}O(H,J)\le4O(H,J^+) \le4\|h\|_{BMO[0,\frac{T}{2}]}.
		\]
		The same argument works for $I\subset [0,T]$. This time we use the symmetry of $H$ around $\frac{T}{2}$.\\
		
		(ii) $|I|\ge \frac{T}{2}$.\\
		
		This time take $J=[nT,mT]$ with $n,m \in \mathbb{Z}$ which contains $I$ and $|J|\le |I|+2T\le 5|I|$. And again like the previous cases, from the symmetry and periodicity of $H$, we get
		\[
		O(H,I)\le2 \frac{|J|}{|I|}O(H,J)\le10O(h,[0,\frac{T}{2}]) \le10\|h\|_{BMO[0,\frac{T}{2}]}.
		\]
		
		The proof is now complete.
	\end{proof}
	In the above, the norm of the extension operator is independent of $T$, and we will use this in the proof of the next lemma.
	
	\begin{remark}
		There are much more general ways to extend $BMO$ functions to the outside of domains, but for the purpose of our paper the above simple lemma is enough. See \cite{MR0554817} for more on extensions. 
	\end{remark}
	
	\begin{lemma}\label{g_nsequence}
		For $c<-1$, there exists a sequence of functions $\{g_n\}$, $n\ge1$ with the following properties:
		\begin{enumerate}
			\item $g_n\ge 0$
			\item $g_n=0$ on $[c,1]$
			\item $\linftyn{g_n}\le1$
			\item $\lim\limits_{n\to \infty}\ave{g_n}{[0,n]}=1$
			\item $\lim\limits_{n\to \infty}\bmoronen{g_n}=0$.
		\end{enumerate}
	\end{lemma}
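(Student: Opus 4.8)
The plan is to obtain $g_n$ by truncating a small multiple of a fixed $\bmor$ function that already vanishes on $[c,1]$ and grows like $\log|x|$ at infinity; the truncation keeps $0\le g_n\le1$, the smallness of the multiple forces $\bmon{g_n}\to0$, and the logarithmic growth on the right forces $\ave{g_n}{[0,n]}\to1$.

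First I would fix $a:=-c>1$ and set $\Psi(x):=\bigl(\log|x|-\log a\bigr)_{+}=\max\{0,\log(|x|/a)\}$. Since $\log|x|\in\bmor$ is classical — call its norm $C_0<\infty$ — and subtracting the constant $\log a$ leaves the $BMO$ norm unchanged while $t\mapsto t_+$ is $1$-Lipschitz, this gives $\Psi\in\bmor$ with $\bmon{\Psi}\le 2C_0$; here I would use the elementary fact that composing a $BMO$ function $u$ with any $1$-Lipschitz $F$ obeys $\osc{F\circ u}{I}\le 2\,\osc{u}{I}$ (average $|F(u(x))-F(u(y))|\le|u(x)-u(y)|$ over $x,y\in I$). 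Two features of $\Psi$ I would record: it vanishes exactly on $[-a,a]\supseteq[c,1]$, and $\Psi(x)=\log(|x|/a)$ whenever $|x|\ge a$. Then I would define
\[
g_n(x):=\min\Bigl\{1,\ \frac{\Psi(x)}{\log(n+1)}\Bigr\},\qquad n\ge1.
\]
Properties (1)--(3) are then immediate ($0\le g_n\le1$, and $g_n=0$ on $[c,1]$ because $\Psi$ is), and for (5) I would write $g_n=F\circ(\Psi/\log(n+1))$ with $F=\min\{1,\cdot\}$ $1$-Lipschitz and combine the oscillation bound above with homogeneity of the norm:
\[
\bmon{g_n}\ \le\ \frac{2\,\bmon{\Psi}}{\log(n+1)}\ \le\ \frac{4C_0}{\log(n+1)}\ \longrightarrow\ 0\qquad(n\to\infty).
\]

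For (4) I would take $n\ge a$, note $g_n=0$ on $[0,1]\subseteq[c,1]$ and $g_n(x)=\log(x/a)/\log(n+1)$ for $a\le x\le n$ (since there $0\le\log(x/a)\le\log(n/a)<\log(n+1)$), bounding $g_n$ trivially by $0\le g_n\le1$ on the leftover set $[1,a]$. A one-line integration then yields
\[
\ave{g_n}{[0,n]}\ =\ \frac{1}{n\log(n+1)}\int_{a}^{n}\log\tfrac{x}{a}\,dx\ =\ \frac{1}{\log(n+1)}\Bigl(\log\tfrac{n}{a}-1+\tfrac{a}{n}\Bigr)\ \longrightarrow\ 1,
\]
using $\log n/\log(n+1)\to1$.

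I expect the only genuine pitfall to be the temptation to let $g_n$ vanish on all of $(-\infty,1]$ (which would make (2) and (4) easy): this is impossible, since on $I=[-n,n]$ one would then have $g_n\equiv0$ on half of $I$ while $\ave{g_n}{I}$ stays bounded away from $0$, so $\bmon{g_n}\gtrsim1$ and (5) fails. That is exactly why $\Psi$ must grow like $\log|x|$ on \emph{both} sides of $[c,1]$. The remaining work is routine; the sole non-elementary ingredient, $\log|x|\in\bmor$, is classical, though if a self-contained treatment is wanted it follows from a short case analysis over intervals exploiting the scaling $\osc{\log|x|}{\lambda I}=\osc{\log|x|}{I}$.
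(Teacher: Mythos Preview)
Your proof is correct. Both you and the paper scale a logarithmic-type function by roughly $1/\log n$ so that the $BMO$ norm vanishes while the averages on $[0,n]$ tend to $1$, but the implementations differ. The paper builds, for each $n$, the even $2n$-periodic extension $H_n$ of $\log^+$ on $[0,n]$ via the preceding Lemma~\ref{evenperiodicextension}, then sets $g_n=\frac{1}{1+\log n}H_n\,(1-\chi_{[c,0]})$; since $\|H_n\|_{L^\infty}=\log n$ this already satisfies $\|g_n\|_{L^\infty}\le1$ with no truncation, and the cutoff on $[c,0]$ is handled as a bounded $L^\infty$ perturbation of size $\log|c|$. You instead take a single fixed unbounded function $\Psi=(\log(|x|/|c|))_+$---already even and vanishing on $[c,1]$---and truncate $\Psi/\log(n+1)$ at height $1$, invoking only the classical fact $\log|x|\in\bmor$ together with the stability of oscillations under $1$-Lipschitz post-composition. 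Your route is more self-contained in that it bypasses the extension lemma; the paper's route avoids the truncation and reuses machinery already in place. One cosmetic remark on your verification of~(4): you note $g_n=0$ on $[0,1]$ and then merely bound $g_n$ by $1$ on $[1,a]$, but in fact $\Psi$ (hence $g_n$) vanishes on all of $[0,a]$, which is what makes the displayed equality exact rather than asymptotic.
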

	\begin{proof}
		Let $\log^+|x|=\max\{0, \log |x|\}$ be the positive part of the logarithm, and consider the function $h_n(x)=\log^+x$ on the interval $[0,n]$, which belongs to $BMO[0,n]$ with $\|h_n\|_{BMO[0,n]}\le \bmoronen{\log^+|\cdot|}$. Then an application of Lemma \ref{evenperiodicextension} with $T=2n$ gives us a sequence of nonnegative functions $H_n$ with $\bmoronen{H_n}\lesssim1$ (here our bounds are independent of $n$). Now, let $g_n=\frac{1}{1+\log n}H_n(x)\left(1-\chi_{[c,0]}(x)\right)$. Then, the first three properties are immediate from the definition, the forth one follows from integration, and the last one from
		\[
		\bmoronen{g_n}\le \frac{1}{1+\log n}\left(\bmoronen{H_n}+\linftyn{H_n\chi_{[c,0]}}\right)\lesssim\frac{\log|c|}{1+\log n} \quad n\ge1.
		\]
		This finishes the proof.
	\end{proof}

	Now, we turn to the proof of the above theorem.
	\begin{proof}[Proof of Theorem \ref{discontinuitytheorem}]
		Let $f$ be as in the theorem, $a=\int_{0}^{1}f$, $c<0$ a constant with large magnitude to be determined later, and let $g_n$ be the sequence constructed in Lemma \ref{g_nsequence}.\\
		
		We will show  that 
		\begin{equation}\label{liminfpositive}
			\varliminf\limits_{n\to \infty}\bmoronen{Mf_n-Mf}>0, \quad f_n:=f+\frac{a}{1-c}g_n, \quad n\ge1.
		\end{equation}
		This proves the theorem once we note that since $f$ and $g_n$ are bounded functions, $f_n$ is bounded too. Also, from the fifth property of $\{g_n\}$ in the above lemma, $\{f_n\}$ converges to $f$ in $\bmor$.\\
		
		To begin with, we claim that $Mf_n=Mf$ on $[c,0]$. To see this, note that from the positivity of $f$ and $g_n$,  $Mf_n(x)\ge Mf(x)$ for all values of $x$, and it remains to show that the reverse inequality holds also. For $x \in [c,0]$, $Mf(x)\ge \frac{\int_{c}^{1}f}{1-c}=\frac{a}{1-c}$, and for any interval $I$ which contains $x$, we have two possibilities:
		\begin{enumerate}[(i)]
			\item either $I\subset (-\infty,0)$, in which case from the third property of $g_n$ we have
			\[
			\ave{f_n}{I}=\ave{\left(f+\frac{a}{1-c}g_n\right)}{I}=\frac{a}{1-c}\ave{g_n}{I}\le \frac{a}{1-c}\linftyn{g_n}\le\frac{a}{1-c}\le Mf(x),
			\]
			\item or $I\cap[0,1]\ne \emptyset$, in which case the second and the third property of $g_n$ gives us
			\begin{align*}
				&\ave{f_n}{I}=\ave{\left(f+\frac{a}{1-c}g_n\right)}{I}=\frac{|I\cap[x,1]|}{|I|}\ave{f}{I\cap[x,1]}+\frac{|I\backslash[x,1]|}{|I|}\frac{a}{1-c}\ave{g_n}{I\backslash[x,1]}\\&\le \frac{|I\cap[x,1]|}{|I|} Mf(x)+ \frac{|I\backslash[x,1]|}{|I|}\frac{a}{1-c}\le Mf(x).
			\end{align*}
		\end{enumerate}
		This proves our claim. \\
		
		Next, we look at the mean oscillation of $Mf_n-Mf$ on $[2c,0]$. Because this function vanishes on $[c,0]$, we have
		
		\begin{equation}\label{meanlessthanoscillation}
			O(Mf_n-Mf,[2c,0]) \ge \frac{1}{4}\ave{(Mf_n-Mf)}{[2c,c]}.
		\end{equation}
		
		To bound the right hand side of the above inequality from below, we note that $0\le f\le \chi_{[0,1]}$ so $Mf(x)\le M(\chi_{[0,1]})(x)=\frac{1}{1-x}$ for $x\le0$. Also, for $x\le 0$ we have 
		\begin{equation*}
			Mf_n(x)=M\left(f+\frac{a}{1-c}g_n\right)(x)\ge\frac{a}{1-c}\ave{g_n}{[x,n]}\ge\frac{a}{1-c}.\frac{n}{n-x}\ave{g_n}{[0,n]}.
		\end{equation*}
		So we get the following estimate for the right hand side in \eqref{meanlessthanoscillation}:
		
		\begin{equation}\label{calculationofmean}
			\ave{\left(Mf_n-Mf\right)}{[2c,c]}\ge\frac{a}{1-c}\ave{g_n}{[0,n]}\ave{\frac{n}{n-x}dx}{[2c,c]}-\ave{\frac{1}{1-x}dx}{[2c,c]}.
		\end{equation}
		
		Combining \eqref{meanlessthanoscillation} and \eqref{calculationofmean}, gives us
		\begin{equation*}
			\bmoronen{Mf_n-Mf}\ge\frac{1}{4}\left(\frac{a}{1-c}\ave{g_n}{[0,n]}\ave{\frac{n}{n-x}dx}{[2c,c]}+\frac{1}{c}\log\left(1+\frac{c}{c-1}\right)\right).
		\end{equation*}
		Now, taking the limit inferior as $n\to\infty$ and using the forth property of $g_n$ gives us
		\begin{equation*}
			\varliminf\limits_{n\to \infty}\bmoronen{Mf_n-Mf}\ge\frac{1}{4}\left(\frac{a}{1-c}+\frac{1}{c}\log\left(1+\frac{c}{c-1}\right)\right).
		\end{equation*}
		This shows that if we have
		\begin{equation}\label{alargerthanlog}
			a>\frac{c-1}{c}\log\left(1+\frac{c}{c-1}\right)
		\end{equation}
		then \eqref{liminfpositive} holds. Here we note that the function on the right hand side of \eqref{alargerthanlog}, attains its minimum, which is $\log2$, at infinity. Also, from the assumption $a>\log2$, so if we choose $|c|$ sufficiently large \eqref{alargerthanlog} holds, and this completes the proof.
	\end{proof}
	By lifting the above functions to higher dimensions with
	
	\begin{equation}\label{lifting}
		f(x_1,..,x_n)=f(x_1), \quad g_m(x_1,..,x_n)=g_m(x_1),
	\end{equation}
	we obtain a counterexample for continuity of the $n$-dimensional uncentered Hardy-Littlewood maximal operator on $\bmorn$, simply because the $\bmorn$ norms and the maximal operator become one dimensional.
	\begin{corollary}
		The uncentered Hardy-Littlewood maximal operator is bounded on $\linftyrn$ equipped with the $BMO$ norm, but it is not continuous.
	\end{corollary}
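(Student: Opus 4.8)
The plan is simply to combine the boundedness result recalled in the introduction with Theorem~\ref{discontinuitytheorem}. For the boundedness half, note that if $f\in\linftyrn$ then $Mf(x)\le\linftyrnn{f}$ for every $x$, so $Mf\in\linftyrn$ and, in particular, $Mf$ is not identically infinite; hence the bound $\bmornn{Mf}\le c(n)\bmornn{f}$ of \cite{MR0621018} applies. Modulo constants this is precisely the assertion that $M$ maps $(\linftyrn,\bmornn{\cdot})$ boundedly into itself.

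For the discontinuity half, I would first record that functions meeting the hypotheses of Theorem~\ref{discontinuitytheorem} exist: for instance $f=\chi_{[0,1]}$ is nonnegative, supported in $[0,1]$, has $\linftyn{f}=1\le1$, and $\lonen{f}=1>\log2$. Theorem~\ref{discontinuitytheorem} then produces bounded functions $\{f_k\}$ with $f_k\to f$ in $\bmor$ but $Mf_k\not\to Mf$ in $\bmor$. Passing to $\rn$ via the lifting \eqref{lifting} keeps $f$ and the $f_k$ bounded, keeps $f_k\to f$ in $\bmorn$, and keeps $Mf_k\not\to Mf$ in $\bmorn$, since (as noted after \eqref{lifting}) both the $\bmorn$ norm and the operator $M$ reduce to their one-dimensional counterparts on functions depending on $x_1$ alone. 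Hence $M$ is not continuous at $f$, viewed as a map from the bounded functions equipped with the $BMO$ norm to themselves.

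I do not foresee a genuine obstacle, as every ingredient is already in place; the content of the corollary is really the juxtaposition of the two halves. The one point worth emphasizing is conceptual rather than technical: boundedness of $M$ on $(\linftyrn,\bmornn{\cdot})$ is the nontrivial input (the Bennett--DeVore--Sharpley inequality), while continuity is a logically separate matter because $M$ is nonlinear, and Section~2 exhibits exactly the failure of the implication "bounded $\Rightarrow$ continuous" in this setting.
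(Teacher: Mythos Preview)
Your proposal is correct and follows essentially the same approach as the paper: the boundedness is the Bennett--DeVore--Sharpley result recalled in the introduction (applicable since $Mf\le\linftyrnn{f}<\infty$), and the discontinuity is Theorem~\ref{discontinuitytheorem} lifted to $\rn$ via \eqref{lifting}, exactly as the paper indicates in the paragraph preceding the corollary. The only addition you make is to exhibit a concrete $f$ (namely $\chi_{[0,1]}$) satisfying the hypotheses of Theorem~\ref{discontinuitytheorem}, which is a reasonable clarification.
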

	
	\section{The Uncentered Hardy-Littlewood Maximal Operator on $\vmorn$}
	As it was mentioned before, $\vmorn$ is the $\bmorn$-closure of the uniformly continuous functions which belong to $\bmorn$. The operator $M$ reduces modulus of continuity, because it is pointwise sublinear, so it preserves uniformly continuous functions. But from our previous result, one cannot deduce boundedness of $M$ on $\vmorn$ by a limiting argument. Nevertheless, we have the following theorem:
	
	\begin{theorem}\label{vmotheorem}
		Let $f \in \vmorn$ and suppose $Mf$ is not identically equal to infinity. Then $Mf$ belongs to $\vmorn$.
	\end{theorem}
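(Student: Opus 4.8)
The plan is to use the characterization of $\vmorn$ via the modulus of mean oscillation: it suffices to show that $\lim_{\delta\to 0}\omega(Mf,\delta)=0$, given that $\lim_{\delta\to 0}\omega(f,\delta)=0$ and $Mf\not\equiv\infty$ (which, since $M$ is pointwise sublinear and $f\in\bmorn$ has at most logarithmic growth, forces $Mf<\infty$ a.e. and $Mf\in L^1_{loc}$). The key quantitative input is the John–Nirenberg-type bound together with the fact that $M$ maps $\bmorn$ to itself boundedly (from \cite{MR0621018}), so $Mf\in\bmorn$; what we need is the refinement that oscillation on \emph{small} cubes stays small.

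First I would fix a small cube $Q$ with $l(Q)=r$ and try to estimate $O(Mf,Q)$ in terms of $\omega(f,\delta)$ for a comparable scale $\delta$ plus an error coming from the ``far away'' part of the supremum defining $M$. The natural decomposition is: for $x\in Q$, split the cubes $R\ni x$ competing in $Mf(x)$ into those with $l(R)\le K r$ (the \emph{local} part) and those with $l(R)> K r$ (the \emph{global} part), for a large parameter $K$ to be chosen. Write correspondingly $Mf = \max\{M^{loc}_{Kr}f,\ M^{glob}_{Kr}f\}$ on $Q$. For the global part, the point is that if $R$ is a huge cube containing $x\in Q$, then $R$ essentially contains a fixed slightly enlarged cube $Q^*\supset Q$, and $\ave{|f|}{R}$ varies very little as $x$ ranges over $Q$ — more precisely $M^{glob}_{Kr}f$ has oscillation on $Q$ bounded by something like $C(n)/K$ times $\ave{|f|}{Q^*}$ plus $C(n)\bmornn{f}$ contributions that shrink; so it is Lipschitz-like at scale $r$ relative to $Kr$ and its oscillation on $Q$ is $O(1/K)$. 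For the local part, $M^{loc}_{Kr}f$ only sees $f$ restricted to the enlarged cube $\widetilde Q$ of side $\sim Kr$; one then uses that $M$ is bounded on $\mathrm{BMO}(\widetilde Q)$ with dimensional constant, combined with $O(f,R)\le \omega(f,Kr)$ for all $R\subset\widetilde Q$, to get $O(M^{loc}_{Kr}f, Q)\le C(n)\,\omega(f,Kr)$.

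The main obstacle I anticipate is making the global-part estimate uniform and honest: controlling the oscillation of $x\mapsto \sup_{R\ni x,\ l(R)>Kr}\ave{|f|}{R}$ over $x\in Q$ requires comparing the two suprema over two nearby points, and one must be careful that the optimal (or near-optimal) $R$ for one point is an admissible competitor, after a controlled dilation, for the other — this costs a factor like $(1+Cr/Kr)^n = 1+O(1/K)$ in the average, and one must check this multiplicative error interacts correctly with the additive $f_R$ terms. Concretely one shows $|M^{glob}_{Kr}f(x)-M^{glob}_{Kr}f(y)|\le \frac{C(n)}{K}\,M f(z_0)$ for $x,y\in Q$ and a reference point, which is finite by hypothesis; hence the oscillation of the global part on $Q$ is at most $\frac{C(n)}{K}\,\mathrm{(local\ value\ of\ }Mf)$, and on a \emph{fixed} bounded region this is uniformly small. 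Finally I would combine: given $\varepsilon>0$, choose $K$ so the global oscillation is $<\varepsilon/2$, then choose $r$ small enough that $C(n)\omega(f,Kr)<\varepsilon/2$; using $O(Mf,Q)\le O(M^{loc}f,Q)+O(M^{glob}f,Q)$ (which follows from $\max$ being $1$-Lipschitz and the triangle inequality for oscillation, up to a factor $2$) gives $\omega(Mf,r)\to 0$.

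One more point to address carefully: the reduction ``$Mf\not\equiv\infty \Rightarrow Mf\in L^1_{loc}$ and $Mf\in\bmorn$'' should be stated explicitly, citing \cite{MR0621018}; and the uniformity over \emph{all} small cubes $Q$ (not just those near the origin) in the global estimate must be justified — here one uses that $f\in\vmorn\subset\bmorn$ so $\bmornn{f}<\infty$ controls the additive terms $f_R$ uniformly in location, and the multiplicative $O(1/K)$ error multiplies $\ave{|f|}{R}$ which, for the near-optimal competitor, is itself $\le Mf$ at a nearby point, but we only need it finite \emph{somewhere} plus the BMO bound to close the argument translation-invariantly. I expect the write-up to hinge on getting clean constants in the two-point global comparison; everything else is routine once the local/global split at scale $Kr$ is set up.
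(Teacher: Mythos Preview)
Your overall strategy---splitting $M$ on each small cube $Q$ into a local part (competing cubes of side $\le Kr$) and a global part (side $>Kr$), then choosing $K$ large and $r$ small---is exactly the decomposition the paper uses. Your treatment of the local part is also in the same spirit; the paper carries it out via the $L^2$-boundedness of $M$ and John--Nirenberg, picking up a harmless factor $K^{n/2}$ in front of $\omega(f,2Kr)$, rather than invoking a local BMO bound for $M$.

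The genuine gap is in the global estimate. Your multiplicative comparison---dilate a near-optimal $R\ni x$ slightly to $R'\ni y$ and use $|R|/|R'|=1+O(1/K)$---indeed gives
\[
\bigl|M^{glob}_{Kr}f(x)-M^{glob}_{Kr}f(y)\bigr|\;\le\; \frac{C(n)}{K}\,Mf(z_Q),\qquad z_Q\in Q,
\]
but this right-hand side is \emph{not uniform over all small cubes $Q$}. For unbounded $f\in\vmorn$ (say $f(x)=\log^-|x|$) one has $Mf(z_Q)\to\infty$ as $Q$ approaches the singularity, so no single $K$ makes the global oscillation small for \emph{every} $Q$ with $l(Q)\le\delta$. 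Your proposed patch (``finite somewhere plus the BMO bound to close the argument translation-invariantly'') does not work: $\bmornn{Mf}$ controls oscillations of $Mf$, not its pointwise size, and there is no translation-invariant way to turn finiteness at one point into a uniform pointwise bound.

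The paper sidesteps this by replacing the multiplicative comparison with an \emph{additive} one that depends only on $\bmornn{f}$. With $Q'\supset R\cup\{y\}$, $l(Q')=l(R)+r$, and the thin shell $A=Q'\setminus R$, one writes $f_R-f_{Q'}=\tfrac{|A|}{|Q'|}(f_R-f_A)$ and then bounds $|f_{Q'}-f_A|$ by John--Nirenberg applied to the small-measure set $A\subset Q'$ (Lemma~\ref{meanoscillatinonAlemma}), obtaining $\lesssim 1+\log(|Q'|/|A|)\lesssim \log K$. Since $|A|/|Q'|\lesssim 1/K$, this yields the location-independent bound
\[
\bigl|M^{glob}_{Kr}f(x)-M^{glob}_{Kr}f(y)\bigr|\;\lesssim\; K^{-1}\log K\cdot\bmornn{f},\qquad x,y\in Q,
\]
which is exactly what is needed to take $\sup_{l(Q)\le\delta}$ and then send first $\delta\to 0$, then $K\to\infty$. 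This John--Nirenberg-on-thin-sets step is the missing idea in your sketch.
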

	Before we prove this, we bring the following lemma which is needed later.
	\begin{lemma}\label{meanoscillatinonAlemma}
		Let $A$ be a measurable subset of a cube $Q$ of positive measure and $f\in \bmorn$ with $\bmornn{f}=1$; then we have
		\begin{equation}\label{jensen}
			\ave{\left|f-f_Q\right|}{A}\lesssim 1+\log\frac{|Q|}{|A|}.
		\end{equation}
	\end{lemma}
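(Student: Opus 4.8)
The plan is to deduce \eqref{jensen} from the John–Nirenberg inequality, which for $f$ with $\bmornn{f}=1$ provides dimensional constants $C_1,c_2>0$ such that for every cube $Q$ and every $\lambda>0$,
\[
\left|\{x\in Q:\ |f(x)-f_Q|>\lambda\}\right|\le C_1|Q|e^{-c_2\lambda}.
\]
First I would rewrite the left-hand side of \eqref{jensen} by the layer-cake formula,
\[
\int_A|f-f_Q|=\int_0^\infty\left|\{x\in A:\ |f(x)-f_Q|>\lambda\}\right|\,d\lambda,
\]
and split the $\lambda$-integral at a threshold $\Lambda>0$ to be chosen. On $[0,\Lambda]$ I bound the distribution function trivially by $|A|$; on $[\Lambda,\infty)$ I enlarge $A$ to $Q$ and apply John–Nirenberg, obtaining
\[
\int_A|f-f_Q|\le \Lambda|A|+C_1|Q|\int_\Lambda^\infty e^{-c_2\lambda}\,d\lambda=\Lambda|A|+\tfrac{C_1}{c_2}|Q|e^{-c_2\Lambda}.
\]

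Dividing by $|A|$ and choosing $\Lambda=c_2^{-1}\log(|Q|/|A|)$ — which is nonnegative since $A\subseteq Q$ — makes the second term equal to the constant $C_1/c_2$, so that
\[
\ave{|f-f_Q|}{A}\le \tfrac{1}{c_2}\log\tfrac{|Q|}{|A|}+\tfrac{C_1}{c_2}\lesssim 1+\log\tfrac{|Q|}{|A|},
\]
which is exactly \eqref{jensen}. The borderline case $A=Q$ is already contained in the definition of the $BMO$ norm (the bound then reads $\ave{|f-f_Q|}{Q}\le1$), so no separate treatment is needed.

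I do not expect a genuine obstacle: once the John–Nirenberg inequality is taken as known, the argument is the standard "optimize the cutoff in the layer-cake decomposition" computation, and the logarithm in the statement is precisely what the exponential decay of the distribution function produces. If one preferred to avoid quoting John–Nirenberg, the alternative would be to run the Calderón–Zygmund stopping-time iteration underlying it — chaining averages over a sequence of nested dyadic subcubes shrinking to a density point of $A$ and gaining a bounded amount of oscillation per generation — but invoking John–Nirenberg directly is cleaner, and that is the route I would take.
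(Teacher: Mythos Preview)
Your argument is correct: both you and the paper deduce the lemma directly from John--Nirenberg, so the approaches are essentially the same. The only cosmetic difference is in the packaging --- the paper uses the exponential-integrability form of John--Nirenberg, $\ave{e^{c|f-f_Q|}}{Q}\lesssim1$, enlarges the domain of integration from $A$ to $Q$ at the cost of the factor $|Q|/|A|$, and then applies Jensen's inequality to the logarithm, whereas you use the equivalent distributional form and optimize a layer-cake cutoff; each route yields the same constant times $1+\log(|Q|/|A|)$ in one line.
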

	\begin{proof}
		From the John-Nirenberg inequality \cite{MR0131498}, there is a dimensional constant $c>0$ such that
		\[
		\ave{e^{c|f-f_Q|}}{A}\le\frac{|Q|}{|A|}\ave{e^{c|f-f_Q|}}{Q}\lesssim \frac{|Q|}{|A|}.
		\]
		
		Now, Jensen's inequality gives us \eqref{jensen}, as follows:
		\[
		\ave{\left|f-f_Q\right|}{A}=\frac{1}{c}\ave{\log e^{c|f-f_Q|}}{A}\le \frac{1}{c}\log\ave{ e^{c|f-f_Q|}}{A}\lesssim 1+\log\frac{|Q|}{|A|}.
		\]
	\end{proof}
	\begin{remark}
		In the above lemma, let $A$ be a rectangle and take $Q$ to be the smallest cube which contains it. Then 
		\[
		O(f,A)\lesssim1+\log e(A),
		\] 
		where $e(A)$ is the eccentricity of $A$, or the ratio of the largest side to the smallest one.
	\end{remark}
	We now turn to the proof of Theorem \ref{vmotheorem}.
	
	\begin{proof}[Proof of Theorm \ref{vmotheorem}]
		Let $f$ be as in the theorem then we have to show that $\varlimsup\limits_{\delta\to 0}\omega(Mf,\delta)=0$. Now, for every cube Q, we have $O(|f|,Q)\le 2O(f,Q)$, which means that $|f|\in \vmorn$ too. From this together with $M(|f|)=Mf$, it is enough to prove the theorem for nonnegative functions. Also, from the homogeneity of $M$ we may assume $\bmornn{f}=1$.\\
		
		Let $Q_0$ be a cube and $c$ a constant with $c>e$. We decompose $M$ into the local part, $M_1$, and the nonlocal part,
		$M_2$, as follows:
		
		\begin{equation*}
			M_1f(x):=\underset{l(Q)\le cl(Q_0)}{\underset{x\in Q}\sup f_Q}, \quad \quad M_2f(x):=\underset{l(Q)\ge cl(Q_0)}{\underset{x\in Q}\sup f_Q}.
		\end{equation*}
		
		We have $Mf(x)=\max\{M_1f(x),M_2f(x)\}$ and so
		\begin{equation}\label{maxlocalnonlocal}
			O(Mf,Q_0)\lesssim O(M_1f,Q_0)+O(M_2f,Q_0).
		\end{equation}
		To estimate the first term in the right hand side of \eqref{maxlocalnonlocal}, let $Q_0^*$ be the concentric dilation of $Q_0$ with $l(Q_0^*)=2cl(Q_0)$. Then for the local part we have
		
		\begin{align*}
			&O(M_1f,Q_0)\le 2\ave{\left|M_1f-f_{Q_0^*}\right|}{Q_0}\le 2\ave{M_1\left|f-f_{Q_0^*}\right|}{Q_0}\\&\le 2\left(\ave{\left(M_1\left|f-f_{Q_0^*}\right|\right)^2}{Q_0}\right)^{\frac{1}{2}}\le 2\left(\frac{1}{|Q_0|}\int \left(M\left|f-f_{Q_0^*}\right|\chi_{Q_0^*}\right)^2\right)^{\frac{1}{2}}.
		\end{align*}
		By using the boundedness of $M$ on $L^2(\rn)$ we get
		\begin{align*}
			O(M_1f,Q_0)\lesssim c^{\frac{n}{2}}\left(\ave{\left|f-f_{Q_0^*}\right|^2}{Q_0^*}\right)^\frac{1}{2},
		\end{align*}
		and an application of the John-Nirenberg inequality gives us
		\begin{equation}\label{localestimate}
			O(M_1f,Q_0)\lesssim c^{\frac{n}{2}}\|f\|_{BMO(Q_0^*)}.
		\end{equation}
		
		To estimate the mean oscillation of the nonlocal part, suppose $x,y\in Q_0$, $M_2f(x)>M_2f(y)$ and let $Q$ be a cube with $l(Q)\ge cl(Q_0)$, which contains $x$ and such that $M_2f(y)<f_Q$. Now, let $Q'$ be a cube such that $Q_0\cup Q\subset Q'$, $l(Q')=l(Q)+l(Q_0)$, and let $A=Q'\backslash Q$. Then $M_2f(y)\ge f_{Q'}$ and we have
		
		\begin{align*}
			&f_Q-M_2f(y)\le f_Q-f_{Q'}=f_Q-\left(\frac{|A|}{|Q'|}f_A+\frac{|Q|}{|Q'|}f_Q\right)=\frac{|A|}{|Q'|}\left(f_Q-f_A\right)\\&\le\frac{|A|}{|Q'|}\left(\left|f_Q-f_{Q'}\right|+\left|f_{Q'}-f_A\right|\right)\lesssim \frac{|A|}{|Q|}O(f,Q')+\frac{|A|}{|Q'|}\ave{\left|f-f_{Q'}\right|}{A}.
		\end{align*}
		Here we note that $|A|=|Q'|-|Q|\approx l(Q_0)l(Q)^{n-1}$, and $l(Q')\approx l(Q)$. So from the above inequality and Lemma \ref{meanoscillatinonAlemma} we get
		
		\begin{equation*}
			f_Q-M_2f(y)\lesssim \frac{l(Q_0)}{l(Q)}\left(1+\log\frac{l(Q)}{l(Q_0)}\right)\lesssim c^{-1}\log c.
		\end{equation*}
		The reason for the last inequality is that $\frac{l(Q_0)}{l(Q)}\le c^{-1}$ and the function $-t\log t$ is increasing when $t<e^{-1}$. Finally, by taking the supremum over all such cubes $Q$, we obtain
		\begin{equation*}
			|M_2f(x)-M_2f(y)|\lesssim c^{-1}\log c, \quad x,y \in Q_0.
		\end{equation*}
		So for the nonlocal part we have
		\begin{equation}\label{nonlocalestimate}
			O(M_2f,Q_0)\le \dashint\limits_{Q_0}\dashint\limits_{Q_0}\left|M_2f(x)-M_2f(y)\right|dxdy\lesssim c^{-1}\log c.
		\end{equation}
		By putting \eqref{maxlocalnonlocal}, \eqref{localestimate} and \eqref{nonlocalestimate} together we get
		\begin{equation*}
			O(Mf,Q_0)\lesssim c^{\frac{n}{2}}\|f\|_{BMO(Q_0^*)}+c^{-1}\log c,
		\end{equation*}
		and taking the supremum over all cubes $Q_0$ with $l(Q_0)\le\delta$ gives us
		\begin{equation*}
			\omega(Mf,\delta)\lesssim c^{\frac{n}{2}}\omega(f,2c\delta)+c^{-1}\log c.
		\end{equation*}
		
		To finish the proof, it is enough to take the limit superior as $\delta\to0$ first and then let $c\to \infty$. 
	\end{proof}
	
	\begin{remark}
		The above argument shows that for all functions in $\bmorn$, if one chooses a sufficiently large localization of $M$, \eqref{nonlocalestimate} holds, meaning that the mean oscillation of the nonlocal part is small. This also shows itself in the dyadic setting: if one considers the dyadic maximal operator $M^d$ and dyadic $BMO$, denoted by $\bmodyadicrn$, then for a dyadic cube $Q_0$
		\[
		M^{d}_2f(x)=\underset{l(Q)\ge l(Q_0)}{\underset{x\in Q}\sup f_Q}=\sup_{Q_0\subset Q}f_Q, \quad x\in Q_0.
		\]
		Hence, $O(M^{d}_2f,Q_0)=0$ and therefore no dilation is needed $(c=1)$. 
	\end{remark}
	
	\section{Slices of BMO functions and unboundedness of directional and strong maximal operators}
	In this final section we discuss properties of slices of functions in $\bmorn$, and for simplicity we restrict ourselves to $\bmortwo$. We begin by asking:
	\begin{question*}
		Suppose $\varphi, \psi$ are two functions of one variable, when does  $f(x,y)=\varphi(x)\psi(y)$ belong to $\bmortwo$?
	\end{question*}
	
	To answer this, we need the following lemma which is an application of Fubini's theorem and its proof is found in \cite{MR4085448}.
	
	\begin{lemma}\label{meanosslicelemma}
		Let $A, B \subset \mathbb{R}$ be two measurable sets with finite positive measure, and $f$ be a measurable function on $\mathbb{R}^2$. Then
		\begin{equation*}
			O(f,A\times B)\approx \ave{O(f_y,A)}{B}dy+\ave{O(f_x,B)}{A}dx.
		\end{equation*}
	\end{lemma}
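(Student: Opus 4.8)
The plan is to establish the two inequalities hidden in the symbol $\approx$ separately, in each case reducing the claim to Fubini's theorem together with one elementary fact: for $g\in L^1(A)$ and any constant $c$ one has $\ave{|g-g_A|}{A}\le 2\ave{|g-c|}{A}$, since $|g_A-c|=\bigl|\ave{(g-c)}{A}\bigr|\le\ave{|g-c|}{A}$. Throughout write $F(y):=(f_y)_A=\ave{f(\cdot,y)}{A}$, so that $\ave{F(y)}{B}dy=f_{A\times B}$ and, symmetrically, $\ave{(f_x)_B}{A}dx=f_{A\times B}$, both by Fubini.

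For the upper bound I would split
\[
\ave{\left|f-f_{A\times B}\right|}{A\times B}\le\ave{\left|f(x,y)-F(y)\right|}{A\times B}+\ave{\left|F(y)-f_{A\times B}\right|}{A\times B}.
\]
By Fubini the first term on the right equals $\ave{O(f_y,A)}{B}dy$ exactly, since $F(y)=(f_y)_A$. For the second term, observe that $F(y)-f_{A\times B}=\ave{\left(f(x,y)-(f_x)_B\right)}{A}dx$, whence $\left|F(y)-f_{A\times B}\right|\le\ave{\left|f(x,y)-(f_x)_B\right|}{A}dx$; integrating this in $y$ over $B$ and applying Fubini bounds the second term by $\ave{O(f_x,B)}{A}dx$. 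This gives $O(f,A\times B)\le\ave{O(f_y,A)}{B}dy+\ave{O(f_x,B)}{A}dx$, with constant $1$.

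For the reverse inequality I would apply the elementary fact slice by slice: for fixed $y$, taking $c=f_{A\times B}$,
\[
O(f_y,A)=\ave{\left|f(x,y)-(f_y)_A\right|}{A}dx\le 2\ave{\left|f(x,y)-f_{A\times B}\right|}{A}dx,
\]
and integrating in $y$ over $B$ and using Fubini gives $\ave{O(f_y,A)}{B}dy\le 2\,O(f,A\times B)$; the symmetric computation with the two coordinates interchanged gives $\ave{O(f_x,B)}{A}dx\le 2\,O(f,A\times B)$, so their sum is at most $4\,O(f,A\times B)$.

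I do not expect a genuine obstacle: the entire argument is bookkeeping with iterated averages, and the only step that is not a tautology is the factor-$2$ comparison between $\ave{|g-g_A|}{A}$ and $\inf_{c}\ave{|g-c|}{A}$ — the same device used in the remark following Lemma~\ref{meanoscillatinonAlemma}. As noted, this lemma and essentially this proof are from \cite{MR4085448}, and I would include the computation only for completeness.
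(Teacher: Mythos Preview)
Your argument is correct; the paper itself does not spell out a proof but simply notes that the lemma is an application of Fubini's theorem and refers to \cite{MR4085448}, which is precisely the route you take. Your two-step Fubini computation together with the standard factor-$2$ comparison $\ave{|g-g_A|}{A}\le 2\ave{|g-c|}{A}$ matches what the paper alludes to, so there is nothing to add.
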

	
	Now, take two intervals $I, J$ with $l(I)=l(J)$. Then, an application of the above lemma to $f(x,y)=\varphi(x)\psi(y)$ gives us
	\begin{equation*}
		O(f,I\times J)\approx O(\psi,J)\ave{|\varphi|}{I}+O(\varphi,I)\ave{|\psi|}{J}.
	\end{equation*}
	Taking the supremum over all such $I,J$ we obtain
	\begin{equation}\label{phiandsicondition}
		\bmortwon{f}\approx \sup\limits_{\delta>0}\left(\sup_{l(I)=\delta}\ave{|\varphi|}{I}.\sup_{l(J)=\delta}O(\psi,J)+\sup_{l(I)=\delta}O(\varphi,I).\sup_{l(J)=\delta}\ave{|\psi|}{J}\right).
	\end{equation}
	When $f\in \bmortwo$ is non-zero on a set of positive measure, the above condition implies that $\varphi, \psi\in\bmor$. To see this, note that if $\varphi$ is non-zero on a set of positive measure, for some Lebesgue point of $\varphi$ like $x$, $\varphi(x)\ne0$. Then from the Lebesgue differentiation theorem, for sufficiently small $\delta$ we must have $\sup_{l(I)=\delta}\ave{|\varphi|}{I}\gtrsim |\varphi(x)|>0$. So $\psi$ has bounded mean oscillation on intervals with length less than $\delta$. For intervals $J$ with $l(J)\ge\delta$, $|\psi|$ has bounded averages because otherwise there is a sequence of intervals $J_n$ with $l(J_n)\ge\delta$ and $\lim\ave{|\psi|}{J_n}=\infty$. Then by dividing each of these intervals into sufficiently small pieces of length between $\frac{\delta}{2}$ and $\delta$, we conclude that $|\psi|$ has large averages over such intervals so $\sup_{l(J)=\delta}\ave{|\psi|}{J}=\infty$. But then $\sup_{l(I)=\delta}O(\varphi,I)=0$, which means that $\varphi$ is constant. We summarize the above discussion in the following proposition.
	\begin{proposition}\label{phiproposition}
		Let $f(x,y)=\varphi(x)\psi(y)$, $f\in\bmortwo$ if and only if \eqref{phiandsicondition} holds and if $f\ne 0$, then $\varphi, \psi \in\bmor$.
	\end{proposition}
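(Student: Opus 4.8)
The plan is to turn Lemma \ref{meanosslicelemma} into an \emph{exact} comparison between $\|f\|_{\bmortwo}$ and the right-hand side of \eqref{phiandsicondition}, which settles the ``if and only if'' at once, and then to extract the membership $\varphi,\psi\in\bmor$ by a short but slightly more careful argument built on Lebesgue points.

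First I would record that for a product the slices are scalar multiples of the factors, so $O(f_y,I)=|\psi(y)|\,O(\varphi,I)$ and $O(f_x,J)=|\varphi(x)|\,O(\psi,J)$. Substituting into Lemma \ref{meanosslicelemma} gives, for every rectangle $I\times J$,
\[
O(f,I\times J)\approx O(\varphi,I)\,\ave{|\psi|}{J}+O(\psi,J)\,\ave{|\varphi|}{I}.
\]
Since the $\bmortwo$ norm is the supremum of $O(f,\cdot)$ over squares, I restrict to $l(I)=l(J)=\delta$; because in each product the two nonnegative factors depend on $I$ and on $J$ independently, the supremum of each product decouples into a product of one-variable suprema, and the supremum of the sum is comparable to the sum of the suprema. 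Taking the supremum first over squares of side $\delta$ and then over $\delta>0$ yields $\|f\|_{\bmortwo}\approx$ (right-hand side of \eqref{phiandsicondition}), hence the equivalence and, en passant, finiteness of that quantity whenever $f\in\bmortwo$.

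For the final assertion, let $f\in\bmortwo$ with $f\neq0$. If $\varphi$ is a.e.\ constant then $f$ depends only on $y$ and the comparison above collapses to $\|f\|_{\bmortwo}\approx|\varphi|\,\bmoronen{\psi}$, so $\psi\in\bmor$ while $\varphi\in\bmor$ trivially; symmetrically if $\psi$ is constant. So assume neither is constant; then $\sup_{l(I)=\delta}O(\varphi,I)>0$ and $\sup_{l(J)=\delta}O(\psi,J)>0$ for every $\delta>0$, since a one-variable function whose oscillation vanishes on all intervals of one fixed length is constant. Choose a Lebesgue point $x_0$ of $|\varphi|$ with $|\varphi(x_0)|>0$; since $\ave{|\varphi|}{[x_0-\delta/2,\,x_0+\delta/2]}\to|\varphi(x_0)|$ as $\delta\to0$, there is $\delta_1>0$ with $\sup_{l(I)=\delta}\ave{|\varphi|}{I}\ge\tfrac{1}{2}|\varphi(x_0)|$ for all $\delta\le\delta_1$. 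Combining this lower bound and $\sup_{l(I)=\delta}O(\varphi,I)>0$ with the finiteness of \eqref{phiandsicondition} forces $\sup_{l(J)=\delta}O(\psi,J)\le C$ for all $\delta\le\delta_1$ (a genuine uniform bound, so $\omega(\psi,\delta_1)\le C$) and $\sup_{l(J)=\delta}\ave{|\psi|}{J}<\infty$ for each fixed $\delta$. For the remaining scales I use the elementary facts that $O(\psi,J)\le 2\ave{|\psi|}{J}$ and that a bound on $\ave{|\psi|}{\cdot}$ at one scale propagates, up to a factor $2$, to all larger scales by averaging; together these give $\sup_{l(J)\ge\delta_1}O(\psi,J)<\infty$. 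Hence $\psi\in\bmor$, and $\varphi\in\bmor$ by the symmetric argument with a Lebesgue point of $|\psi|$ (legitimate because in this case $\psi$ is non-constant, hence nonzero on a set of positive measure).

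The string of one-variable reductions is routine. The only place that needs care is the passage from the ``equal-scale'' coupling encoded in \eqref{phiandsicondition} to control of $\psi$ at \emph{all} scales: ruling out that $|\psi|$ has unbounded averages on long intervals (which, via \eqref{phiandsicondition}, would force $\varphi$ constant and drop us into the degenerate case), and upgrading the scale-by-scale oscillation estimate near $x_0$ to the uniform bound $\omega(\psi,\delta_1)<\infty$. I expect this bookkeeping — not any individual estimate — to be the main obstacle.
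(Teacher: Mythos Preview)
Your argument is correct and follows essentially the same route as the paper: apply Lemma~\ref{meanosslicelemma} to the product, decouple the suprema at each fixed scale to obtain \eqref{phiandsicondition}, then use a Lebesgue point of $|\varphi|$ to bound $O(\psi,\cdot)$ on short intervals and control the long intervals through boundedness of the averages of $|\psi|$. The only organizational difference is that you split off the degenerate case ``$\varphi$ or $\psi$ constant'' at the outset and then, in the nondegenerate case, use $\sup_{l(I)=\delta}O(\varphi,I)>0$ directly to read off $\sup_{l(J)=\delta}\ave{|\psi|}{J}<\infty$ at a fixed scale and propagate upward; the paper instead runs the same step contrapositively (unbounded averages on long intervals would, after subdivision, force $\varphi$ constant). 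Both arguments amount to the same observation and neither gains anything over the other.
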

	\begin{remark}
		When $\varphi$ and $\psi$ are not constants, the above argument shows that they belong to $bmo(\mathbb{R})$, the nonhomogeneous $BMO$ space, which is a proper subspace of  $\bmor$. See \cite{MR0523600} for more on $bmo(\mathbb{R})$.
	\end{remark}
	\begin{corollary}\label{loglog}
		Let $\log^-|x|=\max\{0, -\log |x|\}$ be the negative part of the logarithm and $p,q>0$ with $p+q\le1$. Then the function $f(x,y)=\left(\log^-|x|\right)^p\left(\log^-|y|\right)^q$ is in $\bmortwo$.
	\end{corollary}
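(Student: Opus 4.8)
The plan is to apply Proposition~\ref{phiproposition}: with $\varphi(x)=(\log^-|x|)^p$ and $\psi(y)=(\log^-|y|)^q$ one has $f(x,y)=\varphi(x)\psi(y)$, so it is enough to show that the supremum on the right-hand side of \eqref{phiandsicondition} is finite. Put $L(\delta):=1+\log^+(1/\delta)$ and, for each $\delta>0$, set
\[
\Phi_{\mathrm{av}}(\delta):=\sup_{l(I)=\delta}\ave{|\varphi|}{I},\qquad \Phi_{\mathrm{osc}}(\delta):=\sup_{l(I)=\delta}O(\varphi,I),
\]
with $\Psi_{\mathrm{av}},\Psi_{\mathrm{osc}}$ defined in the same way from $\psi$. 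The heart of the matter is the pair of scale estimates
\[
\Phi_{\mathrm{av}}(\delta)\lesssim L(\delta)^{p},\qquad \Phi_{\mathrm{osc}}(\delta)\lesssim L(\delta)^{p-1},
\]
together with their analogues for $\psi$ (with $q$ in place of $p$; the implied constants depend only on the fixed exponents). Indeed, feeding these into \eqref{phiandsicondition} gives
\[
\bmortwon{f}\approx\sup_{\delta>0}\bigl(\Phi_{\mathrm{av}}(\delta)\Psi_{\mathrm{osc}}(\delta)+\Phi_{\mathrm{osc}}(\delta)\Psi_{\mathrm{av}}(\delta)\bigr)\lesssim\sup_{\delta>0}L(\delta)^{p+q-1}\le 1,
\]
the last inequality because $L(\delta)\ge1$ and $p+q-1\le0$; hence $f\in\bmortwo$. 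Since $\varphi$ is even and supported in $[-1,1]$, it suffices to prove the four estimates for intervals $I$ of length $\delta\le1$ meeting $[0,1]$ (for $\delta\gtrsim1$ one simply has $\Phi_{\mathrm{av}}(\delta)\le\lonen{\varphi}/\delta\lesssim1$ and $\Phi_{\mathrm{osc}}(\delta)\le2\Phi_{\mathrm{av}}(\delta)$, while $L(\delta)=1$).

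For the average bound, write $I=[a,a+\delta]$ with $a\ge0$ (or $I$ straddling $0$). If $a\ge\delta$ then $(\log(1/x))^p$ is decreasing on $I$, so $\ave{|\varphi|}{I}\le(\log(1/a))^p\le(\log(1/\delta))^p$; otherwise $\ave{|\varphi|}{I}\lesssim\frac1\delta\int_0^{2\delta}(\log^-x)^p\,dx$, and this integral is estimated by splitting $(0,2\delta]$ into the dyadic pieces $[2^{-k}\delta,2^{1-k}\delta]$, $k\ge0$, on which $(\log^-x)^p\lesssim k^p+L(\delta)^p$ and whose length is $\lesssim 2^{-k}\delta$; summing the convergent series yields $\int_0^{2\delta}(\log^-x)^p\,dx\lesssim\delta\,L(\delta)^p$. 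This gives $\Phi_{\mathrm{av}}(\delta)\lesssim L(\delta)^p$, and the identical computation with $q$ handles $\psi$.

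For the oscillation bound, when $I$ lies at distance $\gtrsim\delta$ from $0$ the function $\varphi$ is $C^1$ away from the corners $x=\pm1$ (and Hölder of order $p$ near them), which gives $O(\varphi,I)\lesssim\delta^{p}+(\log(1/\delta))^{p-1}\lesssim L(\delta)^{p-1}$. The decisive case is $I=[0,\delta]$ (equivalently, after reflecting, any interval through the origin), where $\varphi$ is unbounded; here one bounds
\[
O(\varphi,[0,\delta])\lesssim\frac1\delta\int_0^\delta\bigl((\log(1/x))^p-(\log(1/\delta))^p\bigr)\,dx,
\]
the integrand being nonnegative, then decomposes $(0,\delta]$ into the dyadic pieces $[2^{-k-1}\delta,2^{-k}\delta]$ and on the $k$-th piece applies the concavity inequality $(M+a)^p-M^p\le p\,a\,M^{p-1}$ with $M=\log(1/\delta)$ and $a\approx k+1$; since $\sum_k 2^{-k}(k+1)$ converges this produces $O(\varphi,[0,\delta])\lesssim(\log(1/\delta))^{p-1}$. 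I expect this last step — the dyadic decomposition at the singularity combined with the concavity of $t\mapsto t^p$ — to be the main obstacle: it is exactly what forces the exponent $p-1$ in the oscillation estimate (and, symmetrically, $p$ in the average estimate), so that the product $L(\delta)^{p+q-1}$ remains bounded precisely under the hypothesis $p+q\le1$.
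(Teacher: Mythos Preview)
Your argument is correct and follows the same route as the paper: apply Proposition~\ref{phiproposition} and verify \eqref{phiandsicondition} by showing that $\sup_{l(I)=\delta}\ave{|\varphi|}{I}\lesssim L(\delta)^{p}$ and $\sup_{l(I)=\delta}O(\varphi,I)\lesssim L(\delta)^{p-1}$ (and symmetrically for $\psi$), so that the product in \eqref{phiandsicondition} is $\lesssim L(\delta)^{p+q-1}\le 1$. The paper simply records these two scale estimates as ``a direct calculation'' (in fact as two-sided bounds, with the large-$\delta$ regime giving $\delta^{-1}$ rather than $O(1)$, which is immaterial here); you supply the calculation via dyadic decomposition near the singularity at $0$ together with the concavity inequality $(M+a)^{p}-M^{p}\le p\,a\,M^{p-1}$, which is exactly what produces the exponent $p-1$.

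One small point worth tightening: in the ``away from the origin'' case your sentence ``$\varphi$ is $C^{1}$ \dots which gives $O(\varphi,I)\lesssim\delta^{p}+(\log(1/\delta))^{p-1}$'' hides the only nontrivial step there. For $I=[a,a+\delta]\subset(0,1)$ with $a\ge\delta$, the mean value theorem gives $O(\varphi,I)\le p\,(\log(1/\xi))^{p-1}\,\delta/\xi$ for some $\xi\in I$, and one must check that this is $\lesssim(\log(1/\delta))^{p-1}$ uniformly in $a$; this follows from the elementary inequality $e^{-(v-u)}\le(u/v)^{1-p}$ for $0<u\le v$ (with $u=\log(1/\xi)$, $v=\log(1/\delta)$), which in turn is just the monotonicity of $t\mapsto e^{-t}(1-t/v)^{-(1-p)}$ on $[0,v]$. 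Once this is said, your outline is complete.
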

	\begin{proof}
		A direct calculation shows that
		\begin{align*}
			&\sup_{l(I)=\delta}\ave{\left(\log^-|x|\right)^pdx}{I}\approx \begin{cases}
				\delta^{-1} & \delta\ge \frac{1}{2}\\
				\left(-\log\delta\right)^p & \delta< \frac{1}{2}
			\end{cases},\\
			&\sup_{l(J)=\delta}O\left(\log^-|\cdot|,J\right)^q\approx \begin{cases}
				\delta^{-1} & \delta\ge \frac{1}{2}\\
				\left(-\log\delta\right)^{q-1} & \delta< \frac{1}{2}
			\end{cases}.
		\end{align*}
		and the claim follows from Proposition \ref{phiproposition}.
	\end{proof}
	\begin{remark}
		The above function $f$ does not have bounded mean oscillation on rectangles, simply because the $BMO$-norm of the slices becomes larger and larger as we get closer to the origin. See \cite{MR2363526} (Example 2.32) for another example.
	\end{remark}
	Now we answer the third question of this paper.
	\begin{theorem}\label{strongmaximal}
		There exists a sequence of bounded functions $\{G_N\}$, $N\ge1$ such that it is bounded in $\bmortwo$ but
		\[
		\lim\limits_{N\to \infty}\bmortwon{M_{e_1}(G_N)}=\infty, \quad \lim\limits_{N\to \infty}\bmortwon{M_{s}(G_N)}=\infty.
		\]
	\end{theorem}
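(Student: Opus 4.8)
The plan is to reduce everything, via Lemma~\ref{meanosslicelemma} together with Fubini, to producing for each $N$ an explicit \emph{cube} $Q_N$ on which the relevant maximal function has mean oscillation $\gtrsim\omega_N$ for some $\omega_N\to\infty$, all the while keeping $\sup_N\bmortwon{G_N}<\infty$. The one–dimensional building block is the truncated logarithm $\phi_N:=\min(\log^-|\cdot|,N)$: one checks $\bmoronen{\phi_N}\lesssim 1$ uniformly in $N$, yet its uncentered maximal function is $\gtrsim N$ not only at the origin but on an interval of length $\gtrsim e^{-N}$. The function $G_N$ will be assembled from translated, rescaled copies of such blocks, placed at dyadic $x$–scales $2^{-1},\dots,2^{-N}$ with the corresponding $y$–scales arranged so that the large values of $G_N$ always sit inside logarithmically deep wells; this last feature is exactly the mechanism behind Corollary~\ref{loglog}, and it is what keeps $\bmortwon{G_N}$ bounded (on cubes the oscillation stays $O(1)$) even though $G_N\notin BMO_s(\mathbb R^2)$ — its slices have one–dimensional $BMO$ norms growing with $N$.

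For $M_{e_1}$ one exploits that it averages only in $x$: the horizontal slice $(M_{e_1}G_N)(\cdot,y)=M[G_N(\cdot,y)]$ is literally the one–dimensional uncentered maximal function of the slice, so by the one–dimensional case of \cite{MR0621018} its $\bmoronen{\cdot}$ is controlled by $\bmoronen{G_N(\cdot,y)}$; the blow–up must therefore be sought in the \emph{vertical} slices of $M_{e_1}G_N$, which are the functions $y\mapsto\sup_{I\ni x_0}\frac1{|I|}\int_I|G_N(x',y)|\,dx'$, i.e.\ suprema over all intervals $I$ of the $x$–averages $y\mapsto\ave{|G_N(\cdot,y)|}{I}$. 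The construction is tuned so that, as $(x_0,y)$ ranges over a small cube $Q_N$, the interval $I$ realising this supremum jumps between widely separated dyadic scales (for $y$ just below $2^{-j}$ only the blocks of index $\ge j$ are "visible" to the $x$–maximal at $x_0\approx0$, producing a staircase in $\log(1/y)$ that overshoots by $\approx\omega_N$); Lemma~\ref{meanosslicelemma} then converts this into $\osc{M_{e_1}G_N}{Q_N}\gtrsim\omega_N$.

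For $M_s$ one runs the same scheme but now uses that the supremum defining $M_s$ already ranges over \emph{eccentric} rectangles — precisely the rectangles on which $G_N$ has unbounded mean oscillation ($G_N\notin BMO_s$). Arranging the geometry so that on a suitable cube $Q_N$ the function $M_sG_N$ is forced to be $\gtrsim\omega_N$ on a fixed fraction of $Q_N$ and negligible on another fixed fraction yields $\osc{M_sG_N}{Q_N}\gtrsim\omega_N$ as before. Should a single family serving both operators be unwieldy, one may instead construct the $M_{e_1}$–example $G_N^{(1)}$ and the $M_s$–example $G_N^{(2)}$ separately, place their (bounded) supports in far–apart regions of the plane, and take $G_N=G_N^{(1)}+G_N^{(2)}$: then $\bmortwon{G_N}\le\bmortwon{G_N^{(1)}}+\bmortwon{G_N^{(2)}}$ stays bounded while the essential disjointness of supports gives $\bmortwon{M_{e_1}G_N}\gtrsim\bmortwon{M_{e_1}G_N^{(1)}}\to\infty$ and $\bmortwon{M_sG_N}\gtrsim\bmortwon{M_sG_N^{(2)}}\to\infty$.

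The main obstacle is the two–sided nature of the estimate, which is a genuine tension: bounding $\bmortwon{G_N}$ from above forces the large values of $G_N$ to be shielded by logarithmic buffers and the transitions to be gradual, and such smoothing tends to make the maximal function equally tame (indeed for product functions $\varphi(x)\psi(y)$ one has $M_{e_1}(\varphi\psi)=\psi\cdot M\varphi$ and $M_s(\varphi\psi)\approx M\varphi\cdot M\psi$, which never produces a blow–up). The construction must be chosen quantitatively — the precise dyadic scales and positions of the building blocks — so that the maximal operator, which mixes the two variables in a way $G_N$ itself does not, nevertheless turns the unbounded \emph{rectangular} roughness of $G_N$ into honest \emph{cube} oscillation of $M_{e_1}G_N$ and $M_sG_N$; carrying out and verifying both inequalities for this explicit choice is the heart of the proof.
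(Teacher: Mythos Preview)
Your proposal is a strategy sketch rather than a proof: you explicitly concede that ``carrying out and verifying both inequalities for this explicit choice is the heart of the proof,'' and that heart is absent. You describe what properties a construction should have (BMO--bounded via the Corollary~\ref{loglog} mechanism, yet producing blow-up of the maximal function on a cube), but you never write down $G_N$ nor verify either inequality for it. The paper, by contrast, gives an explicit construction and checks every property.

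The paper's construction is also different in shape from what you outline. It takes the product building block $f(x,y)=(\log^-|x|)^p(\log^-|y|)^q$ of Corollary~\ref{loglog} and places, for each $k=0,\dots,N$, a row of $2^k$ translated copies at integer $x$--positions $3\sqrt2\, m$ ($2^k\le m<2^{k+1}$), all shifted to height $y=k/N$. Boundedness of $\bmortwon{g_N}$ follows directly from Lemma~\ref{sumlemma} (separated translates of a compactly supported BMO function), not from any delicate balancing of dyadic scales near the origin. The blow-up is obtained on the \emph{fixed} cube $[-3,3]^2$: for $(x,y)\in[0,1]^2$ with $l=\lfloor Ny\rfloor$, the $y$--factor of each copy in row $l$ is $\ge(\log N)^q$, and averaging the $x$--factors over the long interval $[0,3\sqrt2\cdot 2^{l+1}]$ (which contains all $2^l$ copies in that row) gives $M_{e_1}(g_N)\gtrsim(\log N)^q$ on $[0,1]^2$; meanwhile $M_{e_1}(g_N)=0$ for $y<-1$ by support, and $M_s(g_N)\lesssim1$ for $y\le-2$ by a counting argument. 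This handles both operators simultaneously via $M_{e_1}\le M_s$, so your fallback of building separate examples and superposing them is unnecessary. Boundedness of the $G_N$ themselves is then arranged by truncating $g_N$ at height $M$ and using $L^2$--continuity of $M_{e_1},M_s$ to transfer the oscillation estimate to the truncations.

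Your diagnostic remark that the blow-up of $M_{e_1}G_N$ cannot appear in horizontal slices (since $(M_{e_1}G_N)_y=M[(G_N)_y]$ and the one--dimensional maximal preserves $BMO$) is correct and is useful intuition, but the paper bypasses slice analysis of $M_{e_1}G_N$ entirely and works with pointwise lower and upper bounds on disjoint subregions of a single cube.
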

	To prove this we need the following simple lemma.
	\begin{lemma}\label{sumlemma}
		Let $Q_0=[-1,1]^n$, $f\in \bmorn$ with support in $Q_0$, and $x_k$ be a sequence in $\rn$ with $|x_k-x_m|\ge 3\sqrt{n}$ for $k\ne m$. Then $g(x)=\sum f(x-x_k)$ is in $\bmorn$ and $\bmornn{g}\lesssim \bmornn{f}$.
	\end{lemma}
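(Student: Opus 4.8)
The plan is to bound $\osc{g}{Q}$ for an arbitrary cube $Q$ by splitting into \emph{small} and \emph{large} cubes. The geometric input is that the hypothesis $|x_k-x_m|\ge 3\sqrt n$ makes the translated supports $S_k:=x_k+Q_0$ pairwise well separated: since $\operatorname{diam}(Q_0)=2\sqrt n$, any two points belonging to distinct $S_k,S_m$ are at distance at least $3\sqrt n-2\sqrt n=\sqrt n$. In particular the $S_k$ are disjoint (indeed $\sqrt n$-separated), and a set of diameter $<\sqrt n$ meets at most one of them.

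A preliminary observation I will need is that for a function supported in $Q_0$ one has $\lonernn{f}\lesssim\bmornn{f}$. This is not automatic, because $BMO$ is a space modulo constants; it is precisely here that the compact support of $f$ (as opposed to mere membership in $BMO$) is used to pin down the constant. To see it, compare the averages of $f$ over $Q_0$ and over a fixed concentric enlargement $Q_0^{\sharp}$ of $Q_0$ (say of twice the side length): on $Q_0^{\sharp}\setminus Q_0$ one has $f-f_{Q_0^{\sharp}}=-f_{Q_0^{\sharp}}$, so $|f_{Q_0^{\sharp}}|\,|Q_0^{\sharp}\setminus Q_0|\le |Q_0^{\sharp}|\,\bmornn{f}$; since $f_{Q_0^{\sharp}}$ is a fixed dimensional multiple of $f_{Q_0}$, this bounds $|f_{Q_0}|$ by $c(n)\bmornn{f}$, and then $\lonernn{f}\le\|f-f_{Q_0}\|_{L^1(Q_0)}+|Q_0|\,|f_{Q_0}|\lesssim\bmornn{f}$.

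Now for the main estimate. If $l(Q)<1$ then $\operatorname{diam}(Q)<\sqrt n$, so $Q$ meets at most one support $S_k$; hence on $Q$ the function $g$ either vanishes or agrees with $f(\cdot-x_k)$, and translation invariance of the $BMO$ norm gives $\osc{g}{Q}\le\bmornn{f}$. If instead $l(Q)\ge 1$, let $K=\{k:S_k\cap Q\ne\emptyset\}$. Every such $S_k$ lies within distance $2\sqrt n$ of $Q$, hence inside the concentric cube $Q^{*}$ of side length $l(Q)+4\sqrt n\le 5\sqrt n\,l(Q)$; since the $S_k$ have disjoint interiors and each has volume $2^n$, packing them into $Q^{*}$ gives $|K|\le 2^{-n}|Q^{*}|\lesssim|Q|$. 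Using disjointness of the supports, $\int_Q|g|=\sum_{k\in K}\int_{Q\cap S_k}|f(x-x_k)|\,dx\le |K|\,\lonernn{f}\lesssim|Q|\,\bmornn{f}$, so $\osc{g}{Q}\le 2\dashint_Q|g|\lesssim\bmornn{f}$. Taking the supremum over all cubes $Q$ completes the proof.

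The step I expect to be least routine is the preliminary $L^1$–$BMO$ comparison — both recognizing that it is needed and carrying out the two-cube argument — since the separation estimate and the volume-counting in the two main cases are standard once the setup is in place. The only mild care required elsewhere is choosing the threshold between the two cases ($l(Q)<1$ versus $l(Q)\ge 1$) so that the small-cube regime genuinely forces "at most one support", which the bound $3\sqrt n$ is calibrated to provide.
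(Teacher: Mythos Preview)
Your proof is correct and follows essentially the same strategy as the paper's: an $L^1$--$BMO$ comparison for compactly supported $f$, the same small/large cube dichotomy at threshold $l(Q)=1$, and the same volume-counting for the number of translates meeting a large cube. The only cosmetic difference is that the paper obtains $\int_{Q_0}|f|\lesssim\bmornn{f}$ by comparing the averages of $|f|$ over $Q_0$ and the translated cube $Q_0+2e_1$ rather than over a concentric dilation, but the two arguments are interchangeable.
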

	\begin{proof}
		First, by comparing the average of $|f|$ on $Q_0$ with $Q_0+2e_1$ we have
		\[
		\int_{Q_0}|f|=2^n\left(\ave{|f|}{Q_0}-\ave{|f|}{Q_0+2e_1}\right)\lesssim O\left(|f|,[-1,3]^n\right)\le \bmornn{|f|}\le2\bmornn{f}.
		\]
		Next, take a cube $Q$ and suppose for some $k$, $Q\cap (x_k+Q_0)\ne\emptyset$. We note that the distance of the support of functions $f(\cdot-x_k)$ from each other is at least $\sqrt{n}$ so if $l(Q)\le 1$ then $O(g,Q)=O(f(\cdot-x_k),Q)\le \bmornn{f}$. Otherwise, we have
		
		\begin{align*}
			O(g,Q)\le& 2 \ave{|g|}{Q}\le \frac{2}{|Q|}\sum_{Q\cap (x_k+Q_0)\ne\emptyset} \int_{x_k+Q_0}\left|f(y-x_k)\right|dy\\
			&\lesssim \frac{\#\{k| Q\cap (x_k+Q_0)\ne\emptyset\}}{|Q|} \bmornn{f}.
		\end{align*}

		Now to finish the proof, note that $\#\{k| Q\cap (x_k+Q_0)\ne\emptyset\}\lesssim |Q|$ which implies $O(g,Q)\lesssim \bmornn{f}$.
	\end{proof}
	
	\begin{proof}[Proof of Theorem \ref{strongmaximal}]
		We may assume $n=2$, since by a lifting argument similar to \eqref{lifting}, we can conclude the theorem for higher dimensions. Let $f$ be as in Corollary \ref{loglog}, $N$ a positive integer, and consider the following function:
		
		\begin{equation*}
			g_N(x,y)=\sum_{k=0}^{N}\sum_{m=2^k}^{2^{k+1}-1}f\left(x-3\sqrt{2}m,y-\frac{k}{N}\right).
		\end{equation*}
		$g_N$ has the following properties:\\
		
		(i) $\bmortwon{g_N}\lesssim 1$ (here our bounds only depend on $p,q$ but not $N$).\\
		
		This follows from Corollary \ref{loglog} and Lemma \ref{sumlemma} applied to $f$ with $x_{m,k}=\left(3\sqrt{2}m,\frac{k}{N}\right)$.\\

		(ii) $M_s(g_N)(x,y) \ge M_{e_1}(g_N)(x,y)\gtrsim\left(\log N\right)^q$ for $0\le x, y \le 1$ and $N\ge2$.\\
		
		To see this, let $0\le x\le1$ and $\frac{l}{N}\le y <\frac{l+1}{N}$ for some $l<N$. Then consider the average of $(g_N)_y$ on $I=[0,3\cdot2^{l+1}\sqrt{2}]$, which is bounded from below by
		\begin{equation*}
			\ave{(g_N)_y}{I}\ge \frac{1}{3\cdot2^{l+1}\sqrt{2}}\sum_{m=2^l}^{2^{l+1}-1}\int_{I} f\left(t-3\sqrt{2}m,y-\frac{l}{N}\right)dt\\.
		\end{equation*}
		Now note that for $2^l\le m \le 2^{l+1}-1$, $I$ contains the support of $f\left(\cdot-3\sqrt{2}m,y-\frac{l}{N}\right)$ and since $0\le y-\frac{l}{N}\le \frac{1}{N}$ we have
		\begin{align*}
			f\left(t-3\sqrt{2}m,y-\frac{l}{N}\right)\ge \left(\log N\right)^q\left(\log^-\left(t-3\sqrt{2}m\right)\right)^p.
		\end{align*}
		From this we get
		\begin{align*}
			M_{e_1}(g_N)(x,y)\ge\ave{(g_N)_y}{I} &\ge \frac{1}{3\cdot2^{l+1}\sqrt{2}}\sum_{m=2^l}^{2^{l+1}-1}\int_{I} f\left(t-3\sqrt{2}m,y-\frac{l}{N}\right)dt\\
			&\ge\frac{1}{6\sqrt{2}} \left(\log N\right)^q\int_{-1}^{1}\left(\log^-|t|\right)^pdt.
		\end{align*}
		
		At the end we note that for every function $g$,  $M_{e_1}(g)\le M_s(g)$	holds almost everywhere, and this proves the claim.\\
		
		(iii) $M_{e_1}(g_N)(x,y)=0$ for $y<-1$.\\
		
		This holds simply because $g_N$ is supported in $[3\sqrt{2}-1,\infty)\times[-1,2]$.\\
		
		(iv) $M_s(g_N)(x,y)\lesssim 1$ for $0\le x\le1, y\le -2$.\\
		
		To prove this final property of $g_N$, suppose $ R=I\times J$ is a rectangle with $(x,y)\in R$. Then if $R\cap supp(g_N)\ne \emptyset$ we have $l(I),l(J)\ge 1$, and we note that 
		\[
		\#\left\{(m,k)| R\cap supp\left(f\left(\cdot-3\sqrt{2}m,\cdot-\frac{k}{N}\right)\right) \ne\emptyset\right\}\lesssim l(I),
		\]
		which implies
		\[
		\ave{g_N}{R}\le l(I)^{-1}\#\left\{(m,k)| R\cap supp\left(f\left(\cdot-3\sqrt{2}m,\cdot-\frac{k}{N}\right)\right) \ne\emptyset\right\}\int_{\mathbb{R}^2} f \lesssim 1.
		\]
		Now taking the supremum over all rectangles $R$ proves the last property of $g_N$.\\
		
		Next, we measure the mean oscillation of $M_{e_1}(g_N)$ on the square $[-3,3]^2$ by
		\begin{equation*}
			O\left(M_{e_1}(g_N),[-3,3]^2\right)\gtrsim\ave{M_{e_1}(g_N)}{[0,1]^2}-\ave{M_{e_1}(g_N)}{[0,1]\times[-3,-2]}.
		\end{equation*}
		Then from the second and third properties of $g_N$ we obtain
		\begin{equation}\label{gdeltaoscillation}
			O\left(M_{e_1}(g_N),[-3,3]^2\right)\gtrsim \left(\log N\right)^q,
		\end{equation}
		and the same is true for $M_s$ by the third and fourth properties of $g_N$.\\
		
		At this point we note that the constructed sequence of functions $\{g_N\}$ has all the desired properties claimed in the theorem except that they are not bounded functions. However, this can be fixed by using a truncation argument as follows. For each $N,M\ge1$, let $g_{N,M}$ be the truncation of $g_N$ at height $M$, i.e.
		\[
		g_{N,M}:=\max\left\{M, \min\left\{g_N, -M\right\}\right\}.
		\]
		Next, we note that by the first property of $\{g_N\}$ this sequence is bounded in $\bmortwo$ and since $\bmortwon{g_{N,M}}\le4\bmortwon{g_N}$, the double sequence $\left\{g_{N,M}\right\}$ is also bounded in $\bmortwo$.
		Now, for each $N\ge1$, $g_N$ is a compactly supported function in $L^2(\mathbb{R}^2)$ and the sequence $\left\{g_{N,M}\right\}$ converges to $g_N$ in $L^2(\mathbb{R}^2)$ as $M$ goes to infinity. Then, since the operators $M_{e_1}$ and $M_s$ are continuous on this space, we conclude that for each $N\ge1$, $\left\{M_{e_1}(g_{N,M})\right\}$ and $\left\{M_s(g_{N,M})\right\}$ converge in $L^2(\mathbb{R}^2)$ to $M_{e_1}(g_N)$ and $M_s(g_N)$, respectively. Therefore, for $N'$ large enough (depending on $N$), we have
		\[
		O\left(M_{e_1}(g_{N,N'}),[-3,3]^2\right)\ge \frac{1}{2}O\left(M_{e_1}(g_N),[-3,3]^2\right)\gtrsim \left(\log N\right)^q,
		\]
		and
		\[
		O\left(M_{s}(g_{N,N'}),[-3,3]^2\right)\ge \frac{1}{2}O\left(M_{s}(g_N),[-3,3]^2\right)\gtrsim \left(\log N\right)^q.
		\]
		To finish the proof, let $G_N:=g_{N,N'}$ and note that $\left\{G_N\right\}$ is a sequence of bounded functions such that it is bounded in $\bmortwo$ but
		\[
		\lim\limits_{N\to \infty}\bmortwon{M_{e_1}(G_N)}=\infty, \quad \lim\limits_{N\to \infty}\bmortwon{M_{s}(G_N)}=\infty.
		\]
	\end{proof}
	By modifying the above function, one can construct a function in $\bmortwo$ such that none of its horizontal slices are in $\bmor$, which provides a negative answer to the forth question of this paper.\\
	
	\textbf{Example.} Let $\{r_m\}$ be an enumeration of the rational numbers and consider the following function:
	\begin{equation*}
		h(x,y)=\sum_{m\ge 1}f\left(x-3\sqrt{2}m,y-r_m\right).
	\end{equation*}
	Then we have
	\[
	O\left(h_y,[3\sqrt{2}m-1,3\sqrt{2}m+1]\right)=O\left(\left(\log^-\left(\cdot\right)\right)^p,[-1,1]\right)\left(\log^-\left(y-r_m\right)\right)^q.
	\]
	
	So by density of the rational numbers, for all values of $y$ we get $\sup_{l(I)=2} O(h_y,I)=\infty$, even though $h\in \bmortwo$.\\

	The above example shows that one can not control the maximum mean oscillation of the slices, when we look at intervals with a fixed length. However, in the following theorem, we show that there is a loose control when the length of intervals increases.
	
	\begin{theorem}
		Let $f\in\bmortwo$ with $\bmortwon{f}=1$. Then there exist constants $\lambda,c>0$, independent of $f$, such that for any sequence of intervals $I_k$ $(k\ge 1$)  with $l(I_k)=2^k$,  and any interval $J$ with $l(J)=1$, we have
		\begin{equation*}\label{exptheorem}
			\int_{J}e^{\lambda\sup_{k\ge 1}\frac{O\left(f_y,I_k\right)}{k}}dy\le c.
		\end{equation*}
	\end{theorem}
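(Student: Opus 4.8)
The plan is to derive an exponential decay bound for the distribution function of $u(y):=\sup_{k\ge1}\osc{f_y}{I_k}/k$ over $J$, and then read off the claim from the layer-cake formula. After translating we may assume $J=[0,1]$, so $\big|J\big|=1$. First I note that $u(y)>t$ forces $\osc{f_y}{I_k}>tk$ for some $k$, hence $\{y\in J:u(y)>t\}\subseteq\bigcup_{k\ge1}E_k(t)$ with $E_k(t):=\{y\in J:\osc{f_y}{I_k}>tk\}$. Since
\[
\int_J e^{\lambda u}\,dy=1+\int_0^\infty\lambda e^{\lambda t}\,\big|\{y\in J:u(y)>t\}\big|\,dt,
\]
it is enough to prove $\big|\{y\in J:u(y)>t\}\big|\lesssim e^{-\gamma t}$ for all $t$ beyond some threshold $t_0$, with $\gamma,t_0$ absolute; any $\lambda<\gamma$ then gives a finite, absolute bound $c$ (and the regime of small $t$ contributes only the trivial estimate $\big|\{u>t\}\big|\le\big|J\big|=1$).

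The heart of the matter is a per-scale estimate for $|E_k(t)|$. Fix $k$ and let $Q_k$ be a square of side $2^k=l(I_k)$ containing $I_k\times J$ (possible because $l(J)=1\le2^k$). Using the elementary bound $\osc{f_y}{I_k}\le 2\ave{|f(\cdot,y)-f_{Q_k}|}{I_k}$, membership $y\in E_k(t)$ yields $\int_{I_k}|f(s,y)-f_{Q_k}|\,ds>\tfrac{tk}{2}\,2^k$; integrating over $y\in E_k(t)$ gives the lower bound $\int_{I_k\times E_k(t)}|f-f_{Q_k}|>\tfrac{tk}{2}\,2^k\,|E_k(t)|$. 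For the matching upper bound I invoke the John--Nirenberg inequality on the genuine cube $Q_k$ (with absolute constants $C_1,\beta$, legitimate since $\bmortwon{f}=1$) together with $I_k\times E_k(t)\subseteq Q_k$, via the layer-cake identity:
\[
\int_{I_k\times E_k(t)}|f-f_{Q_k}|\le\int_0^\infty\min\!\big(2^k|E_k(t)|,\,C_1\,4^k e^{-\beta\alpha}\big)\,d\alpha=2^k|E_k(t)|\Big(\tfrac1\beta\log\tfrac{C_1 2^k}{|E_k(t)|}+\tfrac1\beta\Big).
\]
Comparing the two displays and solving for $|E_k(t)|$ gives $|E_k(t)|\le C_2\,2^k e^{-\beta tk/2}$ with $C_2=eC_1$ absolute.

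It then only remains to sum the geometric series: for $t\ge t_0:=4\beta^{-1}\log2$ one has $2e^{-\beta t/2}\le\tfrac12$, so
\[
\big|\{y\in J:u(y)>t\}\big|\le\sum_{k\ge1}|E_k(t)|\le C_2\sum_{k\ge1}\big(2e^{-\beta t/2}\big)^k\le 4C_2\,e^{-\beta t/2},
\]
the desired exponential decay with $\gamma=\beta/2$. Feeding this back into the layer-cake formula and taking $\lambda=\beta/4$ finishes the proof with an absolute constant $c$ (explicitly one gets $c=2+2eC_1$).

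I expect the only genuinely delicate step to be the per-scale estimate. The subtlety is twofold: one must pass from the one-dimensional oscillation of the slice $f_y$ to a quantity to which John--Nirenberg on a two-dimensional cube applies — which is exactly what replacing $(f_y)_{I_k}$ by the global constant $f_{Q_k}$ achieves — and one must then accept the unavoidable factor $|Q_k|/|I_k|=2^k$ arising from the aspect ratio of $I_k\times J$. What makes the argument work is that this $2^k$ is harmless: after multiplication by $e^{-\beta tk/2}$ the series over $k$ converges for every large $t$, and large $t$ is precisely the range relevant to exponential integrability.
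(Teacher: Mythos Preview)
Your proof is correct and follows essentially the same route as the paper: decompose the superlevel set $\{u>t\}$ scale by scale, obtain an exponential bound on each $|E_k(t)|$ via John--Nirenberg on a genuine cube $Q_k\supset I_k\times J$, sum the resulting geometric series, and finish by layer-cake. The paper packages the per-scale step slightly differently---it invokes a Fubini-type lemma to pass from $\ave{O(f_y,I_k)}{E_{t,k}}$ to $O(f,I_k\times E_{t,k})$ and then a Jensen/John--Nirenberg lemma giving $O(f,A)\lesssim 1+\log(|Q|/|A|)$---but this is equivalent to your direct computation replacing $(f_y)_{I_k}$ by $f_{Q_k}$ and integrating the distribution bound; you are in fact more careful than the paper in tracking the $2^k$ factor and the threshold $t_0$ explicitly.
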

	
	\begin{proof}
		Let $E_t=\left\{y\in J|\sup\limits_{k\ge 1}\frac{O\left(f_y,I_k\right)}{k}>t\right\}$; then,
		\begin{equation}\label{etk}
			E_t=\bigcup_{k\ge1} E_{t,k},\quad \quad E_{t,k}=\left\{y\in J|\frac{O\left(f_y,I_k\right)}{k}>t\right\}.
		\end{equation}
		Now, taking the average over $E_{t,k}$ and applying Lemma \ref{meanosslicelemma} gives us
		\begin{equation*}
			t<\frac{1}{k}\ave{O\left(f_y,I_k\right)}{E_{t,k}}dy\lesssim \frac{1}{k}O\left(f,I_k\times E_{t,k}\right).
		\end{equation*}
		Next, let $J_k$ be the interval with the same center as J and with $l(J_k)=2^k$, and note that $E_{t,k}\subset J\subset J_k$, so $I_k\times E_{t,k}\subset I_k\times J_k$. Then an application of Lemma \ref{meanoscillatinonAlemma} shows that
		\[
		t\lesssim \frac{1}{k}O\left(f,I_k\times E_{t,k}\right)\lesssim\frac{1}{k}\left(1+\log\frac{|I_k\times J_k|}{|I_k\times E_{t,k}|}\right)\lesssim 1-\frac{1}{k}\log|E_{t,k}|.
		\]
		So for an appropriate constant $a>0$, which is independent of $f$, we have $|E_{t,k}|\lesssim e^{-atk}$ for $t>0$. From this and \eqref{etk} we get the estimate
		\begin{equation*}
			|E_t|\le\sum_{k\ge1} |E_{t,k}|\lesssim e^{-at}, \quad t>0.
		\end{equation*}
		Now, an application of Cavalieri's principle gives us
		\[
		\int_{J}e^{\frac{a}{2}\sup_{k\ge 1}\frac{O\left(f_y,I_k\right)}{k}}dy=\frac{a}{2}\int_{0}^{\infty}e^{\frac{a}{2} t}|E_t|dt\lesssim 1.
		\]
		Hence, \eqref{exptheorem} holds with $\lambda=\frac{a}{2}$, and this finishes the proof.
	\end{proof}
	
	\section*{Acknowledgements}
	I would like to thank G. Dafni and R. Gibara for suggesting these problems and valuable discussions and comments. Also, I am grateful to the referee for all the valuable comments on the manuscript.

	\bibliographystyle{plain}
	\bibliography{mybib}

\begin{thebibliography}{10}

\bibitem{MR0660603}
Colin Bennett.
\newblock Another characterization of {BLO}.
\newblock {\em Proc. Amer. Math. Soc.}, 85(4):552--556, 1982.

\bibitem{MR0621018}
Colin Bennett, Ronald~A. DeVore, and Robert Sharpley.
\newblock Weak-{$L\sp{\infty }$} and {BMO}.
\newblock {\em Ann. of Math. (2)}, 113(3):601--611, 1981.

\bibitem{MR0766959}
Sun-Yung~A. Chang and Robert Fefferman.
\newblock Some recent developments in {F}ourier analysis and {$H^p$}-theory on product domains.
\newblock {\em Bull. Amer. Math. Soc. (N.S.)}, 12(1):1--43, 1985.

\bibitem{MR4085448}
Galia Dafni and Ryan Gibara.
\newblock B{MO} on shapes and sharp constants.
\newblock In {\em Advances in harmonic analysis and partial differential equations}, volume 748 of {\em Contemp. Math.}, pages 1--33. Amer. Math. Soc., [Providence], RI, [2020] \copyright 2020.

\bibitem{MR0807149}
Jos\'{e} Garc\'{\i}a-Cuerva and Jos\'{e}~L. Rubio~de Francia.
\newblock {\em Weighted norm inequalities and related topics}, volume 116 of {\em North-Holland Mathematics Studies}.
\newblock North-Holland Publishing Co., Amsterdam, 1985.
\newblock Notas de Matem\'{a}tica, 104. [Mathematical Notes].

\bibitem{MR0523600}
David Goldberg.
\newblock A local version of real {H}ardy spaces.
\newblock {\em Duke Math. J.}, 46(1):27--42, 1979.

\bibitem{MR0131498}
F.~John and L.~Nirenberg.
\newblock On functions of bounded mean oscillation.
\newblock {\em Comm. Pure Appl. Math.}, 14:415--426, 1961.

\bibitem{MR0554817}
Peter~W. Jones.
\newblock Extension theorems for {BMO}.
\newblock {\em Indiana Univ. Math. J.}, 29(1):41--66, 1980.

\bibitem{MR2363526}
Anatolii Korenovskii.
\newblock {\em Mean oscillations and equimeasurable rearrangements of functions}, volume~4 of {\em Lecture Notes of the Unione Matematica Italiana}.
\newblock Springer, Berlin; UMI, Bologna, 2007.

\bibitem{MR4567945}
Giovanni Leoni.
\newblock {\em A first course in fractional {S}obolev spaces}, volume 229 of {\em Graduate Studies in Mathematics}.
\newblock American Mathematical Society, Providence, RI, [2023] \copyright 2023.

\bibitem{MR2280193}
Hannes Luiro.
\newblock Continuity of the maximal operator in {S}obolev spaces.
\newblock {\em Proc. Amer. Math. Soc.}, 135(1):243--251, 2007.

\bibitem{MR4012803}
Olli Saari.
\newblock Poincar\'{e} inequalities for the maximal function.
\newblock {\em Ann. Sc. Norm. Super. Pisa Cl. Sci. (5)}, 19(3):1065--1083, 2019.

\bibitem{MR0377518}
Donald Sarason.
\newblock Functions of vanishing mean oscillation.
\newblock {\em Trans. Amer. Math. Soc.}, 207:391--405, 1975.

\end{thebibliography}
	
\end{document}